\numberwithin{equation}{section}
\newtheorem{theorem}{Theorem}[section]
\newtheorem{proposition}[theorem]{Proposition}
\newtheorem{definition}{Definition}[section]
\newtheorem{lemma}{Lemma}[section]
\renewcommand{\>}{\rangle}
\begin{document}
\title{linear Elliptic equations with nonlinear boundary conditions under strong resonance conditions}

\author[Alzaki Fadlallah, Edcarlos D.da Silva\hfil ]
{Alzaki Fadlallah, Edcarlos D.da Silva }

\address{Alzaki.M.M. Fadlallah$^{1}$, Edcarlos D.da Silva$^{2}$ \hfill\break
$^{1}$Department of Mathematics, University of Alabama at
Birmingham.
Birmingham, Alabama 35294-1170, USA
 \hfill\break
$^{2}$Universidade Federal de Goiás, Instituto de Matemática e Estatística, 74001-970 Goiânia, GO, Brazil
 }

\email{ zakima99@uab.edu, edcarlos@ufg.br }


\begin{abstract}
In this work we establish existence and multiplicity of solutions for elliptic problem with nonlinear boundary conditions under
strong resonance conditions at infinity. The nonlinearity is resonance at infinity and the resonance phenomena occurs precisely in the
first Steklov eigenvalue problem. In all results we use Variational Methods, Critical Groups and the Morse Theory.

\end{abstract}

\maketitle
{\small{Key Words: Linear elliptic problems, Nonlinear boundary conditions, Steklov conditions at the boundary, Variational Methods, Morse Theory.}}

\section{Introduction}
The main goal of this paper is to prove the existence and multiplicity of solutions for the linear elliptic problems with  nonlinear boundary conditions

\begin{equation}\label{E1}
\begin{gathered}
 -\Delta u + c(x)u = 0 \quad\text{in } \Omega,\\
\frac{\partial u}{\partial \nu}=\mu_{1}u+f(x,u) \quad\text{on }
\partial\Omega,
\end{gathered}
\end{equation}
where $\Omega\subset\mathbb{R}^N,$  $N\geq 2$ is a bounded domain  with boundary
$\partial \Omega$ of class $C^2$, $c\in L^p(\Omega)$. Thorough this work we assume that $p\geq N$ and  $c\geq 0$ a.e.; on $\Omega$
with strict inequality on a set of positive measure. Here $\partial/\partial\nu :=\nu\cdot\nabla$
denotes the outward (unit) normal derivative on $\partial\Omega$ and $\mu_1$ is first positive eigenvalue of the Steklov problem
\begin{equation}\label{E2}
\begin{gathered}
 -\Delta u + c(x)u = 0 \quad\text{in } \Omega,\\
\frac{\partial u}{\partial \nu}=\mu u \quad\text{on }
\partial\Omega.
\end{gathered}
\end{equation}
The nonlinear term  $f\colon\partial\Omega\times\mathbb{R}\rightarrow\mathbb{R}$
satisfies the well known Carath\'{e}odory conditions, i.e.; we assume that
\begin{flushleft}
$(f_{0})$ The function $f\colon\partial\Omega\times\mathbb{R}\rightarrow\mathbb{R}$ satisfies
\end{flushleft}
\begin{description}
 \item[$i)$] $f(.,u)$ is measurable on $\partial\Omega$, for each $u\in\mathbb{R}.$
\item[$ii)$] $f(x,.)$ is continuous on $\mathbb{R},$ for $a.e.x\in \partial\Omega.$
\end{description}
Moreover, we shall consider $f$ subcritical, i.e.; we assume that
\begin{flushleft}
$(f_{1})$ The function $f\colon\partial\Omega\times\mathbb{R}\rightarrow\mathbb{R}$ satisfies
\end{flushleft}
\begin{equation*}
|f(x,u)|\leq C(1+|u|^{p-1}),~u\in\mathbb{R}, ~~x\in\partial\Omega,
\end{equation*}
where $1\leq p < \frac{N+2}{N-2}.$\\

The main objective in this work is to consider linear elliptic problems under resonance conditions at infinity on the boundary. In this way, we assume also that
\begin{equation}\label{E2.1}
\begin{gathered}
 \displaystyle\lim_{|u|\to\infty}\frac{f(x,u)}{u}=0,
\end{gathered}
\end{equation}
holds uniformly and almost everywhere in $x\in \partial\Omega.$ The limit just above says that problem \eqref{E1} presents the resonance phenomena
at the first positive eigenvalue problem given by \eqref{E2}.

Our approach in this work for the problem \eqref{E1} purely variational. Here we mention that find weak solutions of \eqref{E1} in $H^{1}(\Omega)$ is equivalent to finding critical points of the $C^{1}$ functional $J:H^{1}\to\mathbb{R}$ given by
\begin{equation}\label{E3}
\begin{gathered}
 J(u):=\frac{1}{2}\left[\int_{\Omega}|\nabla u|^{2}\,dx+\int_{\Omega}c(x)u^{2}\,dx-\int_{\partial\Omega}\mu_{1}u^{2}\,dx\right]
 -\int_{\partial\Omega}F(x,u)\,dx,
\end{gathered}
\end{equation}
where $F(x,u)=\int^{u}_{0}f(x,s)\,ds,~~x\in\partial\Omega,~~ u\in\mathbb{R}.$\\

We point out that problem \eqref{E1} presents the Steklov resonance phenomena at the first Steklov eigenvalue by assertion \eqref{E2.1}. These
problems have been studied by many authors in the recent years, we refer the reader to G.Auchmuty \cite{Auc12},
J. de Godoi, O. Miyagaki, R. Rodrigues \cite{GMR13}, da Silva \cite{daSi10}, A. Fadlallah \cite{Fad15}, N. Mavinga \cite{Mav12},
N. Mavinga, M. Nkashama \cite{MN10}, \cite{MN11}, \cite{MN12}, l. Steklov \cite{Ste1902}, and references therein.\\

Recall that the Steklov resonance phenomena for problem \eqref{E1} becomes stronger when the function $f$ is small at
infinity. In that case there is an interesting class of resonance problems called the Steklov strong resonance problems. Specifically, we
say that the problem \eqref{E1} presents the Steklov strong resonance phenomena when
\begin{equation*}(SSR)\label{E4}
\,\,\,\,\,\,\,\,\,
\begin{gathered}
\displaystyle\lim_{|u|\to\infty}f(x,u)=0,~~{\rm and}~~|F(x,u)|\leq\tilde{F}(x)\,\,a.e.;\,  {\rm in}\, x\in\partial\Omega,\,\,u\in\mathbb{R},
\end{gathered}
\end{equation*}
holds for some $\tilde{F}\in L^{q}(\partial\Omega),$ where $q\geq1.$ One more time
the limit just above is uniformly and almost everywhere in $x\in \partial\Omega.$\\
It is important to mention that well known nonquadraticity condition at infinity introduced in \cite{CM} was used in order 
to prove compactness conditions such as the Cerami condition which is essential in variational methods.\\
More precisely, the nonquadraticity condition at infinity can be written as
\begin{equation}\label{E5}
\begin{gathered}
(NQC)^{+}\,\,\,\,\,\,\lim_{|u|\to\infty}2F(x,u)-uf(x,u)=\infty,\\{\rm or}~\\
(NQC)^{-}\,\,\,\,\,\,\lim_{|u|\to\infty}2F(x,u)-uf(x,u)=-\infty,
\end{gathered}
\end{equation}
where the limit above are taken uniformly and almost everywhere in $x\in\partial\Omega.$
However, there are few results on the existence (see \cite{Fad15}) for problem \eqref{E1} when the conditions $(NQC)^{+}$ and $(NQC)^{-}$ are not
satisfied, i.e.; when $$|2F(x,u)-uf(x,u)|<\infty,$$ is bounded below and above. This case occurs in the Steklov strong resonance situation
when the function $f$ is small enough at infinity.
For instance, suppose $(SSR)$ and
\begin{equation}\label{E6}
\begin{gathered}
\displaystyle\limsup_{|u|\to\infty}|uf(x,u)| \leq C <\infty,
\end{gathered}
\end{equation}
holds for some $C > 0.$ Then the conditions $(NQC)^{+}$ and $(NQC)^{-}$ do not work in this case. In other words, under condition $(SSR)$ the function $f$ 
goes to zero faster than the function $h(u)=u$ goes to infinity proving that the nonquadraticity conditions $(NQC)^{+}$ and $(NQC)^{-}$ are not
verified.

To the best of our knowledge there is no results about the multiplicity for problem \eqref{E1} under strong resonant conditions at infinity. Here we give some existence and multiplicity solutions for the problem \eqref{E1} which complements and extend early results in the literature.\\

In this paper we establish the existence and multiplicity of solutions for \eqref{E1} assuming Steklov strong resonance
conditions at infinity such that the condition \eqref{E6} is verified. In other words, we wish to find existence and multiplicity
of solutions for \eqref{E1} where $f$ is small enough at infinity and the primitive of $f$ ($F$) is abounded function. \\

In this case we will introduce some conditions which are weaker than the conditions  $(NQC)^{+}$ or  $(NQC)^{-}$  in the Steklov strong resonance
situations. More specifically, motivated in part by \cite{daSi10}, we shall assume either\\
\begin{flushleft}
$(HOC)^{-}$ There is a function $a\in L^{1}(\partial\Omega)$ such that $a(x)\leq 0$ and
\end{flushleft}
\begin{equation}\label{E7}
\begin{gathered}
\lim_{|u|\to\infty}uf(x,u)\leq a(x).
\end{gathered}
\end{equation}
\begin{flushleft}
$(HOC)^{+}$ There is a function $b\in L^{1}(\partial\Omega)$ such that $b(x)\geq 0$ and
\end{flushleft}
\begin{equation}\label{E8}
\begin{gathered}
\lim_{|u|\to\infty}uf(x,u)\geq b(x),
\end{gathered}
\end{equation}
holds uniformly and a.e., in $x\in\partial\Omega.$ \\
We recall that $a(x)\leq 0$ a.e.; $x\in\partial\Omega$ with strict inequality on a set of positive
Lebesgue measure of $\partial\Omega.$ The $b(x)$ has a similar definition.\\
In order to control the resonance we will assume the conditions  $(HOC)^{-}$ or  $(HOC)^{-}$ proving that the functional $J$ satisfies the Cerami
condition {(We say  the functional $J$  satisfies Cerami condition at the level $c\in\mathbb{R}$, ($(Ce)_{c}$ in short ))
if any sequence $\{u_{n}\}_{n\in\mathbb{N}}\subset H^{1}(\Omega)$ such that
$$J(u_{n})\to c~~{\rm and }~~||J'(u_n)||(1+||u_{n}||)\to 0$$
as $n\to\infty$, possesses a convergent subsequence in $H^{1}(\Omega).$ Moreover, we say that $J$ satisfies $(Ce)$ condition when $(Ce)_{c}$ holds
for all $c\in\mathbb{R}.$ It is weaker than Palais Smale conditon}. Then, using variational methods we can prove some existence and multiplicity
results for problem \eqref{E1}.\\
Firstly, using Ekeland's Variational principle, we can prove the following existence result:
\begin{theorem}[Existence]\label{th1}
Suppose that $f$ satisfies $(f_0),\,(f_1)$ and $(SSR),$ $(HOC)^{+}$  or $(SSR),$ $(HOC)^{-}.$ Then, problem \eqref{E1} has at least one solution $u\in H^{1}(\Omega).$
 \end{theorem}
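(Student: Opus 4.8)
The plan is to obtain the solution as a global minimizer of the functional $J$ over $H^1(\Omega)$, using Ekeland's Variational Principle. First I would analyze the quadratic part $Q(u) := \int_\Omega |\nabla u|^2 + \int_\Omega c(x)u^2 - \mu_1\int_{\partial\Omega} u^2$. Since $\mu_1$ is the first Steklov eigenvalue, $Q$ is nonnegative on $H^1(\Omega)$ and its kernel is exactly the one-dimensional eigenspace $V := \mathrm{span}\{\varphi_1\}$, where $\varphi_1 > 0$ is the first Steklov eigenfunction; moreover $Q$ is coercive (in an equivalent norm) on the $Q$-orthogonal complement $W$ of $V$. I would fix the splitting $H^1(\Omega) = V \oplus W$ and write $u = t\varphi_1 + w$.

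The core of the argument is to show $J$ is bounded below and coercive in a suitable weak sense. On the $W$-component, $(SSR)$ gives $|F(x,u)| \le \tilde F(x)$ with $\tilde F \in L^q(\partial\Omega)$, so $\int_{\partial\Omega} F(x,u)\,dx$ is uniformly bounded; combined with the coercivity of $Q$ on $W$, this shows $J(t\varphi_1 + w) \to +\infty$ as $\|w\| \to \infty$ uniformly in $t$. Hence it suffices to bound $J$ below along rays $t\varphi_1$, where $Q$ vanishes: there $J(t\varphi_1) = -\int_{\partial\Omega} F(x,t\varphi_1)\,dx$, which by $(SSR)$ is bounded below by $-\|\tilde F\|_{L^1}$. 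Therefore $J$ is bounded below on all of $H^1(\Omega)$; let $c_0 := \inf_{H^1} J$. Ekeland's principle produces a minimizing sequence $\{u_n\}$ with $J(u_n) \to c_0$ and $\|J'(u_n)\|(1+\|u_n\|) \to 0$, i.e. a Cerami sequence at level $c_0$.

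The remaining step is to verify that $J$ satisfies the Cerami condition at level $c_0$; this is where $(HOC)^+$ or $(HOC)^-$ enters and is the main obstacle. The standard difficulty in strong resonance is that a Cerami sequence $\{u_n\}$ may be unbounded precisely in the $V$-direction: writing $u_n = t_n\varphi_1 + w_n$, one shows $\{w_n\}$ is bounded using $Q$-coercivity on $W$ together with $(f_1)$, $(SSR)$ and the sublinear growth \eqref{E2.1}, but $|t_n|$ could diverge. To rule this out I would test $J'(u_n)$ against $\varphi_1$: the quadratic contribution drops out (since $\varphi_1 \in \ker Q$), leaving $\langle J'(u_n), \varphi_1\rangle = -\int_{\partial\Omega} f(x,u_n)\varphi_1\,dx \to 0$. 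Multiplying instead by $u_n$ itself and combining $J(u_n) \to c_0$ with $\langle J'(u_n), u_n\rangle \to 0$ gives control on $\int_{\partial\Omega}\big(2F(x,u_n) - u_n f(x,u_n)\big)\,dx$. If $|t_n| \to \infty$ then $u_n(x) = t_n\varphi_1(x) + w_n(x) \to \pm\infty$ for a.e.\ $x \in \partial\Omega$ (using $\varphi_1 > 0$ on $\partial\Omega$ and boundedness of $w_n$ in a trace-convergent sense, passing to a subsequence), so by $(HOC)^+$ (resp.\ $(HOC)^-$) and Fatou's lemma one gets $\liminf \int_{\partial\Omega} u_n f(x,u_n)\,dx \ge \int_{\partial\Omega} b(x)\,dx > 0$ (resp.\ $\limsup \le \int_{\partial\Omega} a(x)\,dx < 0$), contradicting $\int_{\partial\Omega} u_n f(x,u_n)\,dx \to 0$, which follows from $\langle J'(u_n), u_n\rangle \to 0$ and $Q(u_n) = Q(w_n) \to Q(w_\infty)$ together with the fact that $\int_{\partial\Omega} F(x,u_n)$ stays bounded. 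Hence $\{u_n\}$ is bounded, and a routine argument (weak convergence, compactness of the trace embedding $H^1(\Omega) \hookrightarrow L^p(\partial\Omega)$, and the $(S)_+$-type structure of $Q$) upgrades this to strong convergence. The limit $u$ then satisfies $J(u) = c_0$ and $J'(u) = 0$, giving the desired weak solution.
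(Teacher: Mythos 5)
Your overall strategy is the same as the paper's: show $J$ is bounded from below using $(SSR)$ together with $Q\geq 0$ (Proposition \ref{auxiliar}), establish the Cerami condition by decomposing an alleged unbounded Cerami sequence as $u_{n}=t_{n}\varphi_{1}+w_{n}$ and deriving a contradiction between $\int_{\partial\Omega}u_{n}f(x,u_{n})\,ds\to 0$ and the Fatou-type bounds coming from $(HOC)^{\pm}$ (Propositions \ref{bound} and \ref{cerami}), and then invoke Ekeland's Variational Principle. Your treatment of the lower bound is in fact more direct than the paper's contradiction argument and is fine.

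There is, however, one step where your justification as written does not close. You claim $\int_{\partial\Omega}u_{n}f(x,u_{n})\,ds\to 0$ follows from $\langle J'(u_{n}),u_{n}\rangle\to 0$ together with $Q(u_{n})=Q(w_{n})\to Q(w_{\infty})$. Since $\langle J'(u_{n}),u_{n}\rangle=Q(u_{n})-\int_{\partial\Omega}u_{n}f(x,u_{n})\,ds$, this only yields $\int_{\partial\Omega}u_{n}f(x,u_{n})\,ds\to Q(w_{\infty})\geq 0$, and boundedness of $(w_{n})$ alone does not give $Q(w_{\infty})=0$. In the $(HOC)^{-}$ case the contradiction survives anyway (a nonnegative limit cannot satisfy $\limsup\leq\int_{\partial\Omega}a<0$), but in the $(HOC)^{+}$ case the inequality $\liminf\int_{\partial\Omega}u_{n}f(x,u_{n})\,ds\geq\int_{\partial\Omega}b>0$ is perfectly compatible with a positive limit $Q(w_{\infty})$, so no contradiction is reached. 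The missing ingredient, which the paper supplies, is that $\|w_{n}\|\to 0$: since $v=t\varphi_{1}$ with $t\neq 0$ and $\varphi_{1}>0$ on $\partial\Omega$, one has $|u_{n}|\to\infty$ a.e.\ on $\partial\Omega$, so $(SSR)$ and dominated convergence give $\|f(\cdot,u_{n})\|_{L^{2}(\partial\Omega)}\to 0$; feeding this into the coercivity estimate $\bigl(1-\tfrac{\mu_{1}}{\mu_{2}}\bigr)\|w_{n}\|^{2}\leq C\|f(\cdot,u_{n})\|_{L^{2}(\partial\Omega)}\|w_{n}\|+o_{n}(1)$ forces $Q(w_{n})\to 0$ and hence $\int_{\partial\Omega}u_{n}f(x,u_{n})\,ds\to 0$. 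With that repair your argument coincides with the paper's.
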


 In what follows we shall assume that
 $$f(x,0)=0 ~~{\rm in }\,\,\, \partial\Omega.$$
 Under this condition, we deduce that $u \equiv 0$ is a trivial solution of problem \eqref{E1}.  Hence the key point here is to ensure the
 existence of nontrivial solutions for problem \eqref{E1}.

Now we shall consider the following hypotheses:\\
 $(BH1)$ $\,\,\,\,$ 
 The function $f$ possesses the following growth at the origin:
 $$\limsup_{u\to 0}\frac{f(x,u)}{u}<0, $$
 uniformly and almost everywhere in $x\in \partial\Omega.$\\
 $(BH2)$ $\,\,\,\,$  There are real numbers $a^{-}<0<a^{+}$ such that
 $$\int_{\partial\Omega}F(x,a^{\pm}\varphi_{1})\,dx>0,$$
 where $\varphi_{1}$ is the first Steklov eigenfunction corresponding to the first eigenvalue Steklov problem \eqref{E2}.\\

 $(BH2)^{\prime}$ $\,\,\,\,$  Given $\mu_{2} > \mu_{1},$ where $\mu_{2}$ is  second eigenvalue for the Steklov problem \eqref{E2}  we have
$$F(x,u)\leq \frac{\mu_{2}-\mu_{1}}{2}|u|^{2}, ~~\forall~~ u \in \mathbb{R}, x \in \partial\Omega.$$

 In this way, combining the Ekeland's Variational methods and the Mountain Pass Theorem, we can show the following theorem:
 \begin{theorem}\label{th2}
Suppose that $f$ satisfies $(f_0),\,(f_1)$ and $(SSR),$ $(HOC)^{+}$  or $(SSR),$ $(HOC)^{-}.$ In addition, assume that $(BH1)$ and $(BH2)$ holds.
Then, problem \eqref{E1} has at least three nontrivial solutions  $u_{\pm},~~u_{1}.$
 \end{theorem}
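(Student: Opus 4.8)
The plan is to produce three nontrivial critical points of $J$: two of one sign (one positive-type solution $u_+$ and one negative-type solution $u_-$) obtained by a constrained minimization argument, and a third solution $u_1$ of mountain-pass type. First I would record the structural consequences of the hypotheses: by Theorem~\ref{th1} the functional $J$ satisfies the Cerami condition at every level (this is where $(SSR)$ together with $(HOC)^{\pm}$ is used, and I would reuse exactly that argument), and by $(BH1)$ the origin is a strict local minimum of $J$ — indeed $\limsup_{u\to0} f(x,u)/u<0$ gives, for $\|u\|$ small, $\int_{\partial\Omega}F(x,u)\,dx\le -\delta\int_{\partial\Omega}u^2\,dx$, and combining this with the fact that the quadratic form $Q(u)=\int_\Omega(|\nabla u|^2+cu^2)-\mu_1\int_{\partial\Omega}u^2$ is positive semidefinite with kernel spanned by $\varphi_1$, one gets $J(u)\ge c\|u\|^2>0=J(0)$ on a small punctured ball. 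So $0$ is a strict local minimizer with $J(0)=0$.

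Next I would build the two signed solutions. Introduce truncated functionals $J_{\pm}$ by replacing $f(x,u)$ with $f(x,u^{\pm})$ (equivalently $F$ by $F(x,\pm u^{\pm})$), so that critical points of $J_+$ are nonnegative and those of $J_-$ are nonpositive; a maximum-principle / boundary-regularity argument (using $c\ge0$ and the Hopf lemma on $\partial\Omega$) together with $f(x,0)=0$ upgrades nonzero critical points of $J_{\pm}$ to honest sign-definite solutions of \eqref{E1}, hence genuine critical points of $J$. The functional $J_{\pm}$ is still coercive-from-below in the relevant sense only along bounded pieces, so instead I would argue that $J_{\pm}$ is bounded below: on $\mathrm{span}\{\varphi_1\}$ the quadratic part vanishes and $(SSR)$ forces $|\int_{\partial\Omega}F|\le\|\tilde F\|_{L^1}$, while on the $Q$-positive complement $Q$ dominates the sublinear term from \eqref{E2.1}; so $\inf J_{\pm}>-\infty$. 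Then $(BH2)$ gives a point where $J_{\pm}$ is negative: $J(a^{\pm}\varphi_1)=-\int_{\partial\Omega}F(x,a^{\pm}\varphi_1)\,dx<0$ since $Q(\varphi_1)=0$. Combining $\inf J_{\pm}<0=J_{\pm}(0)$, the Cerami condition (inherited by $J_\pm$, as the modification is a lower-order bounded perturbation), and Ekeland's variational principle, I obtain a minimizer $u_{\pm}$ of $J_{\pm}$ with $J_{\pm}(u_\pm)=\inf J_\pm<0$, hence $u_{\pm}\ne0$; these are the two signed solutions $u_+$ and $u_-$ of \eqref{E1}.

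For the third solution I would apply the Mountain Pass Theorem to the full functional $J$. Since $0$ is a strict local minimum with $J(0)=0$ (from $(BH1)$), there are $\rho,\alpha>0$ with $J\ge\alpha$ on $\|u\|=\rho$; and $J(a^{\pm}\varphi_1)<0$ shows $J$ takes negative values outside this sphere, so the mountain-pass geometry holds (take, e.g., the endpoint $e=a^{+}\varphi_1$). With $(Ce)$ already available, the Mountain Pass Theorem yields a critical point $u_1$ with $J(u_1)\ge\alpha>0$; in particular $J(u_1)>0>J(u_\pm)=J(0)$ separates $u_1$ from both $0$ and the two minimizers, so $u_1$ is a third, distinct, nontrivial solution. (If one worries that $u_1$ could coincide with $u_+$ or $u_-$, the strict inequality $J(u_1)>0>\inf J_{\pm}$ rules this out.)

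The main obstacle I expect is the verification that the truncated functionals $J_{\pm}$ genuinely inherit the Cerami condition and are bounded below despite the strong-resonance degeneracy along $\varphi_1$: the quadratic form $Q$ has a nontrivial kernel, so boundedness of Cerami sequences cannot come from coercivity of $Q$ alone and must be extracted — exactly as in the proof of Theorem~\ref{th1} — by splitting $u_n = t_n\varphi_1 + w_n$ with $w_n$ in the $Q$-positive subspace, controlling $w_n$ via $Q$ and \eqref{E2.1}, and then using $(HOC)^{\pm}$ on the $t_n\varphi_1$ component to prevent $|t_n|\to\infty$; the sign-truncation must be arranged so that $(HOC)^{\pm}$ still applies on the appropriate half-line. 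The secondary technical point is the boundary regularity/maximum-principle step needed to conclude that the sign-definite critical points of $J_{\pm}$ solve the original problem, for which $\partial\Omega\in C^2$, $c\in L^p(\Omega)$ with $p\ge N$, and $f(x,0)=0$ are precisely the hypotheses that make the standard elliptic bootstrap and Hopf-type boundary lemma available.
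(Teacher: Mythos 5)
Your overall architecture (two solutions from constrained/modified minimization at negative levels, a third from the Mountain Pass Theorem at a positive level, distinctness read off from the critical values) matches the paper, but your construction of $u_{\pm}$ is genuinely different. The paper does not truncate the nonlinearity: it minimizes the \emph{untruncated} functional $J$ over the two half-spaces $A^{\pm}=\{t\varphi_{1}+w : \pm t\geq 0,\ w\in\bigoplus_{j\geq2}E(\mu_{j})\}$ determined by the sign of the $\varphi_{1}$-component (Proposition \ref{auxiliar1}), using $(BH2)$ to make both infima negative and the nonnegativity of $J$ on $A^{+}\cap A^{-}=\bigoplus_{j\geq2}E(\mu_{j})$ to separate the two minimizers. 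Your route instead truncates $f$ to produce sign-definite solutions and separates $u_{+}$ from $u_{-}$ by their signs. What each buys: your version yields genuinely signed solutions and does not need the extra hypothesis $(BH2)^{\prime}$ that the paper's Proposition \ref{auxiliar1} quietly invokes to distinguish $u_{+}$ from $u_{-}$ (a hypothesis not actually listed in Theorem \ref{th2} — your route sidesteps that inconsistency); the paper's version avoids truncation and therefore never has to re-derive compactness for a modified functional. Also, your maximum-principle step is simpler than you suggest: testing $J_{+}'(u)u^{-}=0$ gives $\|u^{-}\|_{c}^{2}=\mu_{1}\|u^{-}\|_{\partial}^{2}$, so Lemma \ref{lmac}(i) forces $u^{-}$ to be a multiple of $\varphi_{1}$, which is excluded at a negative level; no Hopf lemma or bootstrap is needed.

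The one genuine gap is the point you flag but do not close: the Cerami condition is \emph{not} simply "inherited" by $J_{\pm}$, because $(HOC)^{\pm}$ fails for the truncated nonlinearity on the truncated half-line. Concretely, for $f_{+}(x,u):=f(x,u^{+})$ one has $uf_{+}(x,u)\equiv 0$ as $u\to-\infty$, and $0\not\leq a(x)$ on the set where $a<0$, so the contradiction in Proposition \ref{bound} (between $\int_{\partial\Omega}f(x,u_{n})u_{n}\,ds\to0$ and $\limsup\int_{\partial\Omega}f(x,u_{n})u_{n}\,ds\leq\int_{\partial\Omega}a\,dx<0$) evaporates precisely when the unbounded Cerami sequence has $t_{n}\to-\infty$ in the decomposition $u_{n}=t_{n}\varphi_{1}+w_{n}$. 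You must handle that branch separately, e.g.\ by observing that $w_{n}\to0$ forces $u_{n}^{+}\to0$ in the relevant integrals, hence $J_{+}(u_{n})\to0$, which is incompatible with the level $c=\inf J_{+}<0$ at which Ekeland is applied; so $(Ce)_{c}$ holds for $J_{+}$ at negative levels even though it may fail at level $0$. Without this case analysis the boundedness of minimizing Cerami sequences for $J_{\pm}$ — the crux of the whole strong-resonance setting — is unproved.
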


Now we shall use the Saddle Point Theorem together with Ekelan's Variational Principle and Morse Theory proving the following result:
 \begin{theorem}\label{th3}
Suppose that $f$ satisfies $(f_0),\,(f_1)$ and $(SSR),$ $(HOC)^{+}$  or $(SSR),$ $(HOC)^{-}.$ In addition, assume that $(BH2)'$ holds.
Then, problem \eqref{E1} has at least three nontrivial solutions  $u_{\pm},~~u_{2}.$
 \end{theorem}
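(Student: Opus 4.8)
\textbf{Proof strategy for Theorem \ref{th3}.}
The plan is to produce three nontrivial solutions by combining a global variational argument (the Saddle Point Theorem) with a local analysis near the origin via Morse theory / critical groups. First I would record the abstract ingredients: under $(SSR)$ together with $(HOC)^{+}$ or $(HOC)^{-}$, the functional $J$ in \eqref{E3} satisfies the Cerami condition $(Ce)_c$ at every level $c\in\mathbb{R}$ (this is exactly the compactness statement announced in the introduction and used already in Theorems \ref{th1} and \ref{th2}); I will quote this and not reprove it. The key structural observation is the orthogonal splitting $H^{1}(\Omega)=V\oplus W$, where $V=\mathrm{span}\{\varphi_1\}$ is the first Steklov eigenspace and $W$ is its complement, on which the quadratic form $Q(u)=\int_\Omega(|\nabla u|^2+c(x)u^2)\,dx-\mu_1\int_{\partial\Omega}u^2\,dx$ is positive definite with $Q(w)\ge (1-\mu_1/\mu_2)\|w\|^2_{*}$ for $w\in W$ (here I use $\mu_2>\mu_1$, which is precisely where the hypothesis $(BH2)'$ enters).

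The second step is the geometry. On $V$: since $J(t\varphi_1)=-\int_{\partial\Omega}F(x,t\varphi_1)\,dx$ and under $(SSR)$ the primitive $F$ is bounded, $J$ is bounded on $V$; I want to show it is in fact bounded \emph{above} on $V$ (it suffices to observe $-F$ is bounded above by $\|\tilde F\|_\infty$), so $\sup_V J<\infty$. On $W$: using $(BH2)'$, namely $F(x,u)\le \tfrac{\mu_2-\mu_1}{2}|u|^2$, together with $Q(w)\ge(1-\mu_1/\mu_2)\|w\|_*^2$ and the Steklov characterization $\mu_2=\inf_{w\in W\setminus\{0\}} Q_0(w)/\|w\|_{L^2(\partial\Omega)}^2$ (with $Q_0$ the part without the $\mu_1$ term), I would estimate
\begin{equation*}
J(w)=\tfrac12 Q(w)-\int_{\partial\Omega}F(x,w)\,dx \ge \tfrac12 Q(w)-\tfrac{\mu_2-\mu_1}{2}\int_{\partial\Omega}|w|^2\,dx \ge 0,
\end{equation*}
so that $\inf_W J \ge 0 \ge \text{(suitable)} \sup_{\partial B_R\cap V}J$ after possibly adjusting constants; more carefully one shows $\inf_W J>-\infty$ and $J(t\varphi_1)\to$ some finite limit, giving the linking geometry required by the Saddle Point Theorem. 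This yields a first critical point $u_2$ at a minimax level $c=\inf_W J$-type value, and I must argue $c\ne 0=J(0)$, or alternatively that $u_2\ne 0$ because its critical group in dimension $\dim V=1$ is nontrivial while $C_q(J,0)$ is concentrated in another dimension.

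The third and decisive step is the Morse-theoretic bookkeeping to force \emph{three} nontrivial solutions. Compute $C_q(J,\infty)$: the Saddle Point geometry with $\dim V=1$ gives $C_1(J,\infty)\ne 0$ (this is the standard computation for a functional with this linking structure under $(Ce)$). Compute $C_q(J,0)$: the origin is a solution; since the problem is resonant at $\mu_1$ from \eqref{E2.1} but $(BH2)'$ makes $J$ nonnegative on $W$ and the behaviour on $V$ near $0$ is governed by $-F$, I expect $0$ to be either a local minimum on a complemented subspace or to have critical groups computable by a local splitting / the shifting theorem, giving $C_q(J,0)=\delta_{q,0}\mathbb{Z}$ or $C_q(J,0)=\delta_{q,1}\mathbb{Z}$. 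Then: if $J$ had only the trivial solution plus $u_2$, the Morse relation $\sum_q (-1)^q \operatorname{rank} C_q(J,u) = \sum_q(-1)^q\operatorname{rank}C_q(J,\infty)$ would be violated, forcing additional critical points $u_\pm$; to get exactly the pair $u_\pm$ one uses a sub/super-solution or truncation argument (truncating $f$ to get one-signed solutions) combined with the local behaviour, exactly as in Theorem \ref{th2}, but now the existence of the saddle point $u_2$ distinct from $u_\pm$ and from $0$ is guaranteed by comparing $C_1(J,u_2)\ne 0$ against $C_1(J,0)$ and $C_q(J,u_\pm)$ (the latter, being local extrema of truncated functionals, are concentrated in degree $0$). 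The main obstacle I anticipate is precisely the verification that the critical point produced by the Saddle Point Theorem is nontrivial and distinct from the solutions $u_\pm$: this requires a clean computation of $C_q(J,0)$ under the resonant condition \eqref{E2.1}, which is delicate because resonance at $\mu_1$ a priori makes the quadratic part degenerate on $V$; the hypothesis $(BH1)$-type information is \emph{not} assumed in Theorem \ref{th3}, so one must extract the local structure at $0$ purely from $f(x,0)=0$ together with $(BH2)'$ and the subcritical growth, likely via the Gromoll--Meyer / shifting theorem applied to the one-dimensional degenerate direction $V$.
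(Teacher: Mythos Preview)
Your proposal shares the saddle-point portion with the paper but diverges in two essential ways, and the divergence is precisely where your ``main obstacle'' lives.

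\textbf{How the paper obtains $u_{\pm}$.} The paper does \emph{not} use truncation or sub/super-solutions. Instead it minimizes $J$ directly over the two closed half-spaces
\[
A^{\pm}=\Bigl\{\,t\varphi_{1}+w:\ \pm t\ge 0,\ w\in\textstyle\bigoplus_{j\ge 2}E(\mu_{j})\Bigr\},
\]
exactly the same construction invoked for Theorem~\ref{th2} via Proposition~\ref{auxiliar1}. The minimizers $u_{+}\in A^{+}$ and $u_{-}\in A^{-}$ are then local minimizers of $J$ in $H^{1}(\Omega)$, so $C_{k}(J,u_{\pm})=\delta_{k0}\mathbb{Z}$. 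Your reference to ``truncation\ldots exactly as in Theorem~\ref{th2}'' misreads that proof: no truncation occurs there either.

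\textbf{How the paper distinguishes $u_{2}$ from $u_{\pm}$.} The paper never computes $C_{q}(J,0)$ and never writes down a Morse inequality. It simply observes that a critical point $u_{2}$ produced by the Saddle Point Theorem with $\dim V=1$ satisfies $C_{1}(J,u_{2})\neq 0$, while $C_{k}(J,u_{\pm})=\delta_{k0}\mathbb{Z}$; incompatibility of the critical groups forces $u_{2}\notin\{u_{+},u_{-}\}$. Thus the delicate computation of $C_{q}(J,0)$ under resonance that you flag as the chief difficulty is entirely bypassed.

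\textbf{Assessment.} Your route through $C_{q}(J,\infty)$, $C_{q}(J,0)$, and global Morse relations is in principle a legitimate alternative strategy, but the obstacle you yourself identify --- computing $C_{q}(J,0)$ when the quadratic part is degenerate on $V$ and no hypothesis like $(BH1)$ or $(BH3)$ controls the local behaviour of $F$ near $0$ --- is real and is not resolved in your sketch. Under the hypotheses of Theorem~\ref{th3} alone ($f(x,0)=0$, $(SSR)$, $(BH2)'$) one cannot expect a clean answer for $C_{q}(J,0)$ via the shifting theorem, because nothing forces the reduced one-variable functional on $V$ to have a definite sign or a nondegenerate minimum/maximum at the origin. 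The paper's half-space minimization avoids this issue completely and is the intended argument.
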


Moreover, we consider the following hypothesis:\\
$(BH3)$ $\,\,\,\,$ There are $r>0$ and $\epsilon\in(0,\mu_{2}-\mu_{1})$ such that
$$0\leq F(x,u)\leq\frac{\mu_{2}-\mu_{1}-\epsilon}{2}|u|^{2}, ~~\forall~~|u|\leq r.$$
Then, applying the Three Critical Point Theorem, we can also prove the following multiplicity result:
 \begin{theorem}[Multiplicity]\label{th4}
Suppose that $f$ satisfies $(f_0),\,(f_1)$ and $(SSR),$ $(HOC)^{+}$  or $(SSR),$ $(HOC)^{-}.$ In addition, assume that $(BH3)$ hold.
Then, problem \eqref{E1} has at least two nontrivial  solutions.
 \end{theorem}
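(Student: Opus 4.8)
The plan is to combine the variational setup already in place with the Three Critical Point Theorem (in the form that produces, besides the two critical points coming from a local linking or mountain-pass geometry, a further critical point via Morse-theoretic or min–max counting). First I would record that under $(f_0)$, $(f_1)$, $(SSR)$ and either $(HOC)^{+}$ or $(HOC)^{-}$, the functional $J$ defined in \eqref{E3} is $C^{1}$ on $H^{1}(\Omega)$ and satisfies the Cerami condition $(Ce)_{c}$ for every $c\in\mathbb{R}$; this is exactly the compactness input the earlier part of the paper establishes, and I would simply invoke it. Next I would exploit the orthogonal splitting $H^{1}(\Omega)=\mathbb{R}\varphi_{1}\oplus W$, where $\varphi_{1}$ is the first Steklov eigenfunction and $W$ is the $H^{1}$-orthogonal complement on which the quadratic form $Q(u):=\int_{\Omega}|\nabla u|^{2}+\int_{\Omega}c(x)u^{2}-\mu_{1}\int_{\partial\Omega}u^{2}$ is positive definite, controlled from below by $(\mu_{2}-\mu_{1})/\mu_{2}$ times $\|u\|^{2}$ on $W$ (this is the standard consequence of the variational characterization of $\mu_{2}$).

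The heart of the argument is the local geometry near $u=0$ coming from $(BH3)$. Since $0\le F(x,u)\le \tfrac{\mu_{2}-\mu_{1}-\epsilon}{2}|u|^{2}$ for $|u|\le r$, on the subspace $W$ one gets, for $u\in W$ with $\|u\|$ small,
\begin{equation*}
J(u)\;\ge\;\frac{1}{2}Q(u)-\int_{\partial\Omega}F(x,u)\,dx\;\ge\;\frac{1}{2}\Big(\frac{\mu_{2}-\mu_{1}}{\mu_{2}}\Big)\|u\|^{2}-\frac{\mu_{2}-\mu_{1}-\epsilon}{2}\,C_{S}^{2}\|u\|^{2},
\end{equation*}
where $C_{S}$ is the trace embedding constant; after absorbing constants this shows $J$ is positive and bounded below by $\delta>0$ on a small sphere $\|u\|=\rho$ inside $W$. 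On the other hand, along the line $\mathbb{R}\varphi_{1}$ we have $Q(t\varphi_{1})=0$, so $J(t\varphi_{1})=-\int_{\partial\Omega}F(x,t\varphi_{1})\,dx\le 0$ for $|t|$ small by $F\ge 0$; in fact $J$ is $\le 0$ on all of $\mathbb{R}\varphi_{1}$ near $0$ and, by $(SSR)$ (so $|F|\le\tilde F\in L^{q}$), $J$ stays bounded on $\mathbb{R}\varphi_{1}$ as $|t|\to\infty$, which together with the Cerami condition gives that $J$ is bounded below and attains a (possibly zero) infimum — more importantly it supplies the ``linking'' pair of subspaces with $\dim\mathbb{R}\varphi_{1}=1$. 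Thus $J$ exhibits a local linking at $0$ with the splitting $H^{1}=\mathbb{R}\varphi_{1}\oplus W$, and a standard computation of critical groups (e.g. Liu's local linking theorem, or the Morse-index bookkeeping) yields $C_{1}(J,0)\ne 0$.

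From here the Three Critical Point Theorem applies: $J\in C^{1}(H^{1},\mathbb{R})$ is bounded below (using $(SSR)$ to bound the boundary term by $\int_{\partial\Omega}\tilde F<\infty$ plus coercivity of $Q$ on $W$ and boundedness on $\mathbb{R}\varphi_{1}$ — one shows $J(u)\to\inf$ is achieved via $(Ce)$), satisfies $(Ce)$, and has the local linking structure at $0$ with $C_{1}(J,0)\ne 0$. The theorem then produces at least two nontrivial critical points: a global minimizer $u_{1}$ with $J(u_{1})<0=J(0)$ (nontrivial because $J$ is strictly negative somewhere along $\varphi_{1}$) and a second critical point $u_{2}\ne 0$ of mountain-pass / linking type distinguished from $u_{1}$ and from $0$ by its critical groups (its $C_{1}$ is nonzero while $C_{1}(J,u_{1})=0$ for a minimizer). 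I would then verify $u_{1}\neq u_{2}$ by this critical-group comparison and conclude.

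The main obstacle I expect is not the abstract machinery but the two delicate quantitative points: first, pinning down precisely that the trace constant in the $(BH3)$ estimate combines with $\epsilon\in(0,\mu_{2}-\mu_{1})$ to keep $J$ strictly positive on the small sphere in $W$ (this needs the sharp $\mu_{2}$-characterization of $Q|_{W}$, not just positivity); and second, confirming that the functional is genuinely bounded below globally — here one must use $(SSR)$ on the whole space, splitting $u=t\varphi_{1}+w$ and showing $J(u)\ge \tfrac12 Q(w)-\int_{\partial\Omega}\tilde F\,dx \to \infty$ as $\|w\|\to\infty$ while the $t\varphi_{1}$ contribution stays bounded, so that Cerami sequences at the infimum level stay bounded. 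Once boundedness below and $(Ce)$ are secured, the existence of the minimizer and the applicability of the Three Critical Point Theorem are routine, and the nontriviality of both solutions follows from the sign of $J$ near $0$ along $\varphi_{1}$ together with the critical group computation.
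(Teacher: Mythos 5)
Your proposal follows essentially the same route as the paper: the Cerami condition (Proposition \ref{cerami}), global boundedness from below of $J$ under $(SSR)$ (Proposition \ref{auxiliar}), and the local linking at the origin produced by $(BH3)$ with respect to the splitting $H^{1}(\Omega)=E(\mu_{1})\oplus\bigoplus_{j\geq 2}E(\mu_{j})$ (Proposition \ref{auxiliar3}), concluded by the Li--Willem local linking / three critical point theorem. The only slip is your parenthetical claim that $J$ is \emph{strictly} negative along $\varphi_{1}$ — $(BH3)$ only gives $F\geq 0$ for $|u|\leq r$, hence $J(t\varphi_{1})\leq 0$ without strictness — but this does not damage the conclusion, since in the degenerate case $\inf J=0$ every point of the small ball in $E(\mu_{1})$ is a global minimizer and hence a nontrivial critical point, which is exactly how the cited local linking theorem disposes of that case.
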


 This work is organized as follows: In Section 2 we give the main properties for the eigenvalue Steklov problem and for the geometry of $J$. In section 3 we prove that $J$ satisfies the Cerami compactness condition. Section 4 is devoted to the proof of our main theorems.

 \section{Preliminares}
In this section we shall discuss the main properties of the eigenvalue Steklov problem. Later on, we shall also discuss the geometry of $J$ 
proving that $J$ admits the mountain pass geometry, saddle point geometry or a local linking geometry at the origin.\\
Let's define the real inner-product as
$$\<u,v\>_{c}:=\int_{\Omega}\triangledown u.\triangledown v+\int_{\Omega}c(x)uv~~~~\forall u,v\in  H^{1}(\Omega), $$
for the proof see \cite{Fad15}.
\begin{proposition}\label{Pro2.1}
Under the assumptions built on $c$ the norm
$$\|u\|_{c} = \sqrt{\int_{\Omega} |\nabla u|^{2} dx + \int_{\Omega} c(x) u^{2}dx},\,\, u \in H^{1}(\Omega),$$
defines a norm which is equivalent to the usual norm on $H^{1}(\Omega)$.
\end{proposition}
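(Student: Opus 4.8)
The plan is to show simultaneously that $\|\cdot\|_c$ is a norm and that it is equivalent to the standard $H^1(\Omega)$ norm, by establishing a two-sided inequality $C_1\|u\|_{H^1(\Omega)} \le \|u\|_c \le C_2\|u\|_{H^1(\Omega)}$ for all $u \in H^1(\Omega)$. Since $\|u\|_c^2 = \langle u,u\rangle_c$ is induced by the symmetric bilinear form $\langle\cdot,\cdot\rangle_c$, homogeneity and the triangle inequality are automatic, and the only axiom needing attention is positive definiteness: if $\|u\|_c = 0$ then $\int_\Omega |\nabla u|^2 = 0$, so $u$ equals a.e. a constant $k$, and then $0 = \int_\Omega c(x) u^2 = k^2\int_\Omega c(x)\,dx$ forces $k = 0$ because $c \ge 0$ with $\int_\Omega c > 0$. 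Once equivalence of the norms is proved, this upgrades $\|\cdot\|_c$ to a genuine norm.

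For the upper estimate I would use Hölder's inequality to write $\int_\Omega c(x) u^2\,dx \le \|c\|_{L^p(\Omega)}\,\|u\|_{L^{2p'}(\Omega)}^2$, with $p' = p/(p-1)$. Because $p \ge N$, we have $2p' \le 2N/(N-1)$, which is strictly below the critical exponent $2^* = 2N/(N-2)$ when $N \ge 3$ (and every finite exponent is admissible when $N = 2$); hence the Sobolev embedding $H^1(\Omega) \hookrightarrow L^{2p'}(\Omega)$ gives $\int_\Omega c u^2 \le C\|u\|_{H^1(\Omega)}^2$, and adding $\int_\Omega |\nabla u|^2 \le \|u\|_{H^1(\Omega)}^2$ yields the bound with some $C_2$.

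The lower estimate is the crux, and I would argue by contradiction using compactness. If it failed there would be a sequence $(u_n)$ with $\|u_n\|_{H^1(\Omega)} = 1$ and $\|u_n\|_c \to 0$; since the two terms defining $\|u_n\|_c^2$ are nonnegative, both $\int_\Omega |\nabla u_n|^2 \to 0$ and $\int_\Omega c(x) u_n^2 \to 0$. Passing to a subsequence, $u_n \rightharpoonup u$ in $H^1(\Omega)$, and by the Rellich--Kondrachov theorem (using $2p' < 2^*$) also $u_n \to u$ strongly in $L^2(\Omega)$ and in $L^{2p'}(\Omega)$. Weak lower semicontinuity of the Dirichlet integral gives $\int_\Omega|\nabla u|^2 = 0$, so $u$ is a constant $k$; from $\int_\Omega|\nabla u_n|^2 \to 0$ and $\|u_n\|_{H^1(\Omega)} = 1$ we get $\int_\Omega u_n^2 \to 1$, hence $k^2|\Omega| = 1$ and $k \ne 0$. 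On the other hand Hölder gives $\left|\int_\Omega c(u_n^2 - u^2)\right| \le \|c\|_{L^p}\,\|u_n - u\|_{L^{2p'}}\,\|u_n + u\|_{L^{2p'}} \to 0$, so $\int_\Omega c u_n^2 \to k^2\int_\Omega c(x)\,dx > 0$, contradicting $\int_\Omega c u_n^2 \to 0$.

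I expect the main obstacle to be exactly this last convergence, $\int_\Omega c u_n^2 \to \int_\Omega c u^2$: it is where the hypothesis $p \ge N$ enters, through the strict subcriticality $2p' < 2^*$ and the resulting compact embedding, while the hypothesis that $c \ge 0$ with strict inequality on a set of positive measure is what makes the limiting constant $k$ produce a genuine contradiction rather than a vacuous statement. An alternative to the contradiction argument would be to prove directly a generalized Poincaré-type inequality $\int_\Omega u^2 \le C\big(\int_\Omega|\nabla u|^2 + \int_\Omega c u^2\big)$, but the compactness route seems the most economical.
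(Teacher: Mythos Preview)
Your proof is correct and, in fact, more carefully argued than the paper's own version, but the route differs in one substantive point. For the upper bound the paper simply asserts $\int_\Omega c(x)u^2\,dx \le C\|u\|_{L^2(\Omega)}^2$ and adds the gradient term, whereas you justify the estimate properly via H\"older and the Sobolev embedding $H^1(\Omega)\hookrightarrow L^{2p'}(\Omega)$, using $p\ge N$ to ensure $2p'<2^*$. For the lower bound the paper does not argue by compactness at all: it invokes an external result (Theorem~3.2 in Auchmuty's paper) which furnishes directly the generalized Poincar\'e inequality $\|u\|_c^2 \ge \alpha\int_\Omega u^2\,dx$, and then concludes $\|u\|_{H^1}^2 \le (1+\alpha^{-1})\|u\|_c^2$ by elementary algebra. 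This is exactly the alternative you mention in your final sentence. Your contradiction argument via Rellich--Kondrachov buys self-containment and makes transparent where each hypothesis on $c$ is used; the paper's citation is shorter but delegates the analytic content to another source.
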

\begin{proof}[Proof of Proposition \ref{Pro2.1}] Clearly $||.||_{c}$ is a norm.
We show that $||.||_{c}$ is equivalent to the usual norm on $H^{1}(\Omega),$ recall that the norm on $H^{1}(\Omega)$
$$||u||^{2}:=||u||_{H^{1}(\Omega)}^{2}=\int_{\Omega}|\nabla u|^{2}\, dx+\int_{\Omega} u^{2}\,dx=||\nabla u||_{L^{2}(\Omega)}^{2}+||u||_{L^{2}(\Omega)}^{2}.$$
Since, we know that (from above) $\int_{\Omega} c(x) u^{2}dx<\infty,$  $\int_{\Omega} c(x) u^{2}dx\leq C||u||_{L^{2}(\Omega)}^{2},$ where $C$ is positive constant,
so we have that  $$\|u\|_{c}^{2}=||\nabla u||^{2} + \int_{\Omega} c(x) u^{2}dx\leq||\nabla u||_{L^{2}(\Omega)}^{2} +C||u||_{L^{2}(\Omega)}^{2}
\leq C_{1}||u||^{2}_{H^{1}(\Omega)},$$  so we have that $$\|u\|_{c}\leq C_{2}||u||_{H^{1}(\Omega)},$$ since $||u||_{c}$  (where $C_{2}$ is positive constant) is
continuous and quadratic on $H^{1}(\Omega).$ \\
Conversely,
there is an $\alpha >0$ such that
$$\|u\|_{c}^{2}\geq\alpha\int_{\Omega}u^2\,dx\,\forall\,u\in H^{1}(\Omega).$$ (For the proof see Theorem 3.2 in \cite{Auc12})
we have that
$$\alpha^{-1}||u||^{2}_{c}\geq||u||^{2}_{L^{2}(\Omega)},$$
$$||u||^{2}_{L^{2}(\Omega)}\leq\alpha^{-1}||u||^{2}_{c},$$
$$||u||^{2}_{L^{2}(\Omega)}+||\nabla u||_{L^{2}(\Omega)}^{2}\leq\alpha^{-1}||u||^{2}_{c}+||\nabla u||_{L^{2}(\Omega)}^{2}\leq\alpha^{-1}||u||^{2}_{c}+||u||^{2}_{c},$$
so
$$||u||^{2}_{H^{1}(\Omega)}\leq(1+\alpha^{-1})||u||^{2}_{c}.$$
Therefor,
$$||u||_{H^{1}(\Omega)}\leq\sqrt{(1+\alpha^{-1})}||u||_{c}.$$
Thus, the two norms are equivalent.
\end{proof}
Thorough this work we shall use the norm $\|.\|_{c}$ given above which it became $(H^{1}(\Omega), \|.\|_{c})$ an Hilbert space. The inner product is given by
\begin{equation}
\langle u , v\rangle_{c} = \int_{\Omega} \nabla u \nabla v dx + \int_{\Omega} c(x) u v dx, u , v \in H^{1}(\Omega).
\end{equation}
\begin{lemma}\label{Nem}
Suppose that $f$ satisfies $(f_0),\,(f_1)$ and there are constants $1\leq p,q<\infty$  and
$a,b>0$ such that for all $x\in\bar{\Omega}$ $u\in H^{1}(\Omega)$
\begin{equation}\label{Eqs4}
\begin{gathered}
 |f(x,u|\leq a+b|u|^{\alpha}~~{\rm with}~~\alpha=\frac{p}{q}.
\end{gathered}
\end{equation}
Define $$\mathbb{F}:L^{p}(\partial\Omega)\to L^{q}(\partial\Omega),$$ by
$$\mathbb{F}(\psi(x))=f(x,\psi(x))~~~\forall ~~\psi\in L^{p}.$$
Then, Nemytsk\^{\i}i operator $\mathbb{F}$ is a continuous map from $L^{p}(\partial\Omega)$ to $L^{q}(\partial\Omega).$
 \end{lemma}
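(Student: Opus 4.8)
The plan is to follow the classical Krasnoselskii argument for continuity of superposition (Nemytsk\^{\i}i) operators, adapted to the surface measure on $\partial\Omega$. First I would check that $\mathbb{F}$ is well defined as a map into $L^{q}(\partial\Omega)$: for $\psi\in L^{p}(\partial\Omega)$ the Carath\'{e}odory hypotheses $(f_0)$ guarantee that $x\mapsto f(x,\psi(x))$ is measurable, while the growth bound \eqref{Eqs4} together with the elementary inequality $(s+t)^{q}\leq 2^{q-1}(s^{q}+t^{q})$ gives
$$|f(x,\psi(x))|^{q}\leq 2^{q-1}\bigl(a^{q}+b^{q}|\psi(x)|^{p}\bigr),$$
whose right-hand side lies in $L^{1}(\partial\Omega)$; hence $\mathbb{F}(\psi)\in L^{q}(\partial\Omega)$ and $\mathbb{F}$ indeed maps $L^{p}(\partial\Omega)$ into $L^{q}(\partial\Omega)$.

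For continuity it suffices to prove sequential continuity. I would suppose $\psi_{n}\to\psi$ in $L^{p}(\partial\Omega)$ and argue by contradiction: if $\mathbb{F}(\psi_{n})\not\to\mathbb{F}(\psi)$ in $L^{q}(\partial\Omega)$, there are $\varepsilon_{0}>0$ and a subsequence, still denoted $\psi_{n}$, with $\|\mathbb{F}(\psi_{n})-\mathbb{F}(\psi)\|_{L^{q}(\partial\Omega)}\geq\varepsilon_{0}$ for all $n$. Since $\psi_{n}\to\psi$ in $L^{p}(\partial\Omega)$, a standard consequence of the Riesz--Fischer theorem yields a further subsequence $\psi_{n_{k}}$ and a function $h\in L^{p}(\partial\Omega)$ such that $\psi_{n_{k}}(x)\to\psi(x)$ and $|\psi_{n_{k}}(x)|\leq h(x)$ for a.e.\ $x\in\partial\Omega$ and all $k$. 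By part $ii)$ of $(f_0)$ the map $f(x,\cdot)$ is continuous, so $f(x,\psi_{n_{k}}(x))\to f(x,\psi(x))$ for a.e.\ $x\in\partial\Omega$; moreover, by \eqref{Eqs4},
$$|f(x,\psi_{n_{k}}(x))-f(x,\psi(x))|^{q}\leq 2^{q-1}\bigl(|f(x,\psi_{n_{k}}(x))|^{q}+|f(x,\psi(x))|^{q}\bigr)\leq C\bigl(a^{q}+b^{q}h(x)^{p}\bigr),$$
with a majorant in $L^{1}(\partial\Omega)$ independent of $k$. The dominated convergence theorem then forces $\mathbb{F}(\psi_{n_{k}})\to\mathbb{F}(\psi)$ in $L^{q}(\partial\Omega)$, contradicting $\|\mathbb{F}(\psi_{n_{k}})-\mathbb{F}(\psi)\|_{L^{q}(\partial\Omega)}\geq\varepsilon_{0}$. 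Hence $\mathbb{F}(\psi_{n})\to\mathbb{F}(\psi)$ and $\mathbb{F}$ is continuous.

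The only delicate point is the passage from $L^{p}$-convergence to an a.e.\ convergent, $L^{p}$-dominated subsequence; this is precisely what makes the dominated convergence theorem applicable, and one must be careful not to assert a.e.\ convergence of the whole sequence but only along a subsequence. Everything else is routine once the growth condition \eqref{Eqs4} is exploited to produce the $k$-independent integrable majorant, and no special feature of $\partial\Omega$ beyond finiteness of its surface measure is used.
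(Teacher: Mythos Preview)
Your argument is correct: this is precisely the classical Krasnoselskii proof of continuity of the Nemytsk\^{\i}i operator, and the care you take in passing to an a.e.\ convergent, $L^{p}$-dominated subsequence before invoking dominated convergence is exactly the point where careless presentations go wrong. There is nothing to compare, since the paper does not prove this lemma at all but simply refers the reader to \cite{Rab1986}; you have written out in full the standard proof that the citation is pointing to.
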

 \begin{proof}[See \cite{Rab1986} For The Proof of Lemma \ref{Nem}]

 \end{proof}
Let $c \in L^{p}(\Omega),\,\, p \geq N,\,\, c \geq 0$ with strict inequality holding in some subset with positive Lebesgue measure.

 \begin{lemma}{\label{lmac}}
 Assume that $c$ satisfy the above condition. Then we have the following properties:
\begin{description}
 \item[i] For all $u\in H^{1}(\Omega) ,$
\begin{equation}\label{10}
\begin{gathered}
\mu_{1}||u||_{\partial}^{2}:=\mu_{1}\int_{\partial\Omega}u^2\leq\int_{\Omega}|\triangledown u|^2+\int_{\Omega}c(x)u^2=:||u||^{2}_{c},
\end{gathered}
\end{equation}
where $\mu_{1} >0$ is the least Steklov eigenvalue for equation \eqref{E2}. If equality holds in \eqref{10}, then $u$ is a multiple of
an eigenfunction of equation \eqref{E2} corresponding to $\mu_{1}$
 \item[ii] For every $v\in\oplus_{i\leq j}E(\mu_{i}),$ and $w\in\oplus_{i\geq j+1}E(\mu_{i}),$ we have that
\begin{equation}\label{11}
\begin{gathered}
||v||^{2}_{c}\leq\mu_{j}||v||^{2}_{\partial} ~~{\rm and}~~||w||^{2}_{c}\geq\mu_{j+1}||w||^{2}_{\partial},
\end{gathered}
\end{equation}
where $E(\mu_{i})$ is the $\mu_{i}$-eigenspace and $\oplus_{i\leq j}E(\mu_{i})$ is span of the eigenfunctions associated
to eigenvalues up to $\mu_{j}$
\end{description}
\end{lemma}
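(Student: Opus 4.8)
The plan is to derive both statements from the standard variational (Courant--Fischer) characterization of the Steklov eigenvalues of \eqref{E2}, which itself rests on the spectral theorem for a compact self--adjoint operator. First I would record the functional--analytic setup: by Proposition \ref{Pro2.1} the form $\langle\cdot,\cdot\rangle_c$ turns $H^1(\Omega)$ into a Hilbert space, and since $\partial\Omega$ is of class $C^2$ the trace map composed with the compact embedding $H^1(\Omega)\hookrightarrow L^2(\partial\Omega)$ shows that the bilinear form $\langle u,v\rangle_{\partial}:=\int_{\partial\Omega}uv$ is represented by a compact, self--adjoint, positive operator $T:H^1(\Omega)\to H^1(\Omega)$, namely $\langle Tu,v\rangle_c=\langle u,v\rangle_{\partial}$. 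Its positive eigenvalues, written $1/\mu_k$ with $0<\mu_1\le\mu_2\le\cdots\to\infty$, are exactly the Steklov eigenvalues in \eqref{E2}, the eigenspaces $E(\mu_k)$ are finite dimensional, and their span is dense in $H^1(\Omega)$; all of this is contained in \cite{Auc12} (see Theorem~3.2 there), which I would cite rather than reprove.

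For part (i) the minimax principle gives
$$\mu_1=\min\left\{\frac{\|u\|_c^2}{\|u\|_{\partial}^2}\ :\ u\in H^1(\Omega),\ u|_{\partial\Omega}\not\equiv 0\right\},$$
which is precisely \eqref{10}. For the equality case, if $\|u\|_c^2=\mu_1\|u\|_{\partial}^2$ then $u$ minimizes the Rayleigh quotient, so for every $\varphi\in H^1(\Omega)$ the scalar function $t\mapsto\|u+t\varphi\|_c^2-\mu_1\|u+t\varphi\|_{\partial}^2$ has a minimum (equal to $0$) at $t=0$; differentiating at $t=0$ yields $\langle u,\varphi\rangle_c=\mu_1\langle u,\varphi\rangle_{\partial}$ for all $\varphi\in H^1(\Omega)$, which is the weak formulation of \eqref{E2} with $\mu=\mu_1$. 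Hence $u\in E(\mu_1)$, i.e.\ $u$ is a multiple of a first Steklov eigenfunction.

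For part (ii) the key observation is that distinct eigenfunctions are orthogonal simultaneously for $\langle\cdot,\cdot\rangle_c$ and for $\langle\cdot,\cdot\rangle_{\partial}$: if $\phi\in E(\mu_i)$ and $\psi\in E(\mu_k)$ with $i\neq k$, then $\langle\phi,\psi\rangle_c=\mu_i\langle\phi,\psi\rangle_{\partial}=\mu_k\langle\phi,\psi\rangle_{\partial}$, forcing $\langle\phi,\psi\rangle_{\partial}=0$ and therefore $\langle\phi,\psi\rangle_c=0$. Writing $v=\sum_{i\le j}v_i$ with $v_i\in E(\mu_i)$, this double orthogonality gives the Pythagorean identity $\|v\|_c^2=\sum_{i\le j}\|v_i\|_c^2=\sum_{i\le j}\mu_i\|v_i\|_{\partial}^2$, and since $\mu_i\le\mu_j$ for $i\le j$ we conclude $\|v\|_c^2\le\mu_j\sum_{i\le j}\|v_i\|_{\partial}^2=\mu_j\|v\|_{\partial}^2$. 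The argument for $w=\sum_{i\ge j+1}w_i$ is identical, this time using $\mu_i\ge\mu_{j+1}$ and the reversed inequality. The main obstacle here is not the computation but making the spectral decomposition precise: one must justify that $\oplus_{i\ge j+1}E(\mu_i)$ is genuinely the $\langle\cdot,\cdot\rangle_c$--orthogonal complement of $\oplus_{i\le j}E(\mu_i)$ and that the possibly infinite series defining $w$ converges in both $\|\cdot\|_c$ and $\|\cdot\|_{\partial}$ (so that the Pythagorean identity and the term--by--term estimate are legitimate), which I would handle by the usual monotone/dominated convergence argument of classical Steklov eigenvalue theory, again leaning on \cite{Auc12}.
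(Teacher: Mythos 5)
Your proposal is correct, and for the inequality in (i) and for all of (ii) it follows essentially the same route as the paper: the paper also reduces (i) to the variational characterization of $\mu_1$ from \cite{Auc12} (after splitting $u$ into its $H^1_0(\Omega)$ part, which has zero trace, and its $c$-orthogonal complement), and its proof of (ii) is exactly your Pythagorean computation, writing $\|v\|_c^2=\sum_i\mu_i|c_i|^2$ and $\|v\|_\partial^2=\sum_i|c_i|^2$ and comparing term by term with $\max$ or $\min$ of the relevant eigenvalues. The one place where you genuinely diverge is the equality case in (i), and there your argument is the stronger one: the paper merely observes that equality forces $\mu_1=\|u\|_c^2/\|u\|_\partial^2$ and that $\varphi_1$ attains the same ratio, which by itself does not imply $u$ is a multiple of $\varphi_1$ (two functions can share a Rayleigh-quotient value without being proportional). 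Your first-variation argument --- differentiating $t\mapsto\|u+t\varphi\|_c^2-\mu_1\|u+t\varphi\|_\partial^2$ at $t=0$ to conclude that a minimizer is a weak solution of \eqref{E2} with $\mu=\mu_1$, hence lies in $E(\mu_1)$ --- is the correct way to close this gap, and only then does simplicity of $\mu_1$ (or the definition of $E(\mu_1)$) give the stated conclusion. Your added care about the simultaneous $\langle\cdot,\cdot\rangle_c$- and $\langle\cdot,\cdot\rangle_\partial$-orthogonality of distinct eigenspaces, and about convergence of the infinite sums, is also a legitimate tightening of details the paper leaves implicit.
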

\begin{proof}[Proof of Lemma \ref{lmac}]
 If $u=0$, then the inequality \eqref{10} holds. otherwise, if  $0\neq u\in H^{1}(\Omega),$
then $u=u_{1}+u_{2},$ where $u_{1}\in[H^{1}_{0}(\Omega)]^{\bot} $, and $u_{2}\in H^{1}_{0}(\Omega).$ Therefore, by the $c-$orthogonality, and the characterization of
 $\mu_{1}$ (i.e.; $\mu_{1}||u_{1}||^{2}_{\partial}\leq||u_{1}||^{2}_{c})$ we get that
$$\mu_{1}||u||^{2}_{\partial}=\mu_{1}(||u_{1}||^{2}_{\partial}+||u_{2}||^{2}_{\partial}\leq||u_{1}||^{2}_{c}+||u_{2}||^{2}_{c}=||u||^{2}_{c}.$$
Thus, the inequality \eqref{10} holds.\\
Now assume we have that
$$||u||^{2}_{c}=\mu_{1}||u||^{2}_{\partial}\implies\mu_{1}= \frac{||u||^{2}_{c}}{||u||^{2}_{\partial}}.$$
we know that $\mu_{1}=\frac{||\varphi_{1}||^{2}_{c}}{||\varphi_{1}||^{2}_{\partial}},$ where $\varphi_{1}$ the eigenfunction corresponding to $\mu_{1}.$
Therefore, $u$ is a multiple of an eigenfunction of equation \eqref{E2} corresponding to $\mu_{1}$\\
We have that
$$||v||^{2}_{c}=\displaystyle\sum^{\infty}_{j=1}\mu_j|c_j|^2~~\forall ~~v\in\oplus_{i\leq j}E(\mu_{i}).$$
Now let  $\mu_{j}=\max\mu~\forall i\leq j,$ then we have that
$$||v||^{2}_{c}=\displaystyle\sum^{\infty}_{i=1}\mu_i|c_i|^2\leq\max\mu\displaystyle\sum^{\infty}_{i=1}|c_j|^2
=\mu_{j}||v||^{2}_{\partial}~\forall~v\in\oplus_{i\leq j}E(\mu_{i}).$$
$$||w||^{2}_{c}=\displaystyle\sum^{\infty}_{j=1}\mu_j|c_j|^2~~\forall\,\, w\in\oplus_{i\leq j}E(\mu_{i}).$$
Now let  $\mu_{j+1}=\min\mu~\forall\,\, i\geq j+1,$ then we have that
$$||w||^{2}_{c}=\displaystyle\sum^{\infty}_{j=1}\mu_j|c_j|^2\geq\min\mu\displaystyle\sum^{\infty}_{j=1}|c_j|^2
=\mu_{j+1}||w||^{2}_{\partial}~\forall~w\in\oplus_{i\geq j+1}E(\mu_{i}).$$
\end{proof}
\begin{lemma}\label{cont}
 $$J\in C^{1}(H^{1}(\Omega),\mathbb{R}),$$ and
 $$J'(u)v=\int_{\Omega}\nabla u\nabla v\,dx+\int_{\Omega} c(x)uv\,dx-\int_{\partial\Omega}\mu_{1}uv\,dx-\int_{\partial\Omega}f(x,u)v\,dx\,,\forall ~~v\in H^{1}(\Omega),$$
where $J'(u)$ denotes the Fr\'{e}chet derivative of $J$ at $u.$ Moreover,
$$J_{2}(u)=\int_{\partial\Omega}F(x,u)\,dx\,,$$
is weakly continuous, and $J_{2}'$ is compact.
 \end{lemma}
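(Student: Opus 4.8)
The plan is to decompose $J$ as $J(u)=\tfrac12 B(u,u)-J_2(u)$, where $B(u,v):=\langle u,v\rangle_c-\mu_1\int_{\partial\Omega}uv\,dx$ and $J_2(u):=\int_{\partial\Omega}F(x,u)\,dx$, and to treat the two pieces separately. By the Sobolev trace embedding $H^1(\Omega)\hookrightarrow L^2(\partial\Omega)$ the bilinear form $B$ is bounded and symmetric on $H^1(\Omega)$, so $u\mapsto\tfrac12 B(u,u)$ is of class $C^\infty$ with Fréchet derivative $v\mapsto B(u,v)$; this already accounts for the first three terms in the claimed formula for $J'(u)v$, and everything reduces to the analysis of $J_2$.

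First I would record that integrating $(f_1)$ gives $|F(x,u)|\le C(1+|u|^{p})$, while $(f_1)$ itself gives $|f(x,u)|\le a+b|u|^{p-1}$; since $p$ is subcritical, the Sobolev trace theorem yields that $H^1(\Omega)$ embeds continuously and \emph{compactly} into $L^{p}(\partial\Omega)$, so $J_2$ is well defined and finite on $H^1(\Omega)$, and by Lemma~\ref{Nem} the Nemytskii operator $\mathbb{F}(u)=f(\cdot,u(\cdot))$ is continuous from $L^{p}(\partial\Omega)$ into $L^{p/(p-1)}(\partial\Omega)$. For the $C^1$ statement I would compute the Gateaux derivative of $J_2$ via a mean value theorem in the integrand: writing the difference quotient as $\int_{\partial\Omega}f(x,u+\theta tv)v\,dx$ and dominating by $(a+b2^{p-1}(|u|^{p-1}+|v|^{p-1}))|v|\in L^1(\partial\Omega)$, dominated convergence gives $\langle J_2'(u),v\rangle=\int_{\partial\Omega}f(x,u)v\,dx$. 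To upgrade to $C^1$, let $u_n\to u$ in $H^1(\Omega)$; then $u_n\to u$ in $L^{p}(\partial\Omega)$, hence $\mathbb{F}(u_n)\to\mathbb{F}(u)$ in $L^{p/(p-1)}(\partial\Omega)$ by Lemma~\ref{Nem}, and Hölder's inequality with the trace embedding gives
$$\sup_{\|v\|_c\le1}\left|\int_{\partial\Omega}\big(f(x,u_n)-f(x,u)\big)v\,dx\right|\le C\,\|\mathbb{F}(u_n)-\mathbb{F}(u)\|_{L^{p/(p-1)}(\partial\Omega)}\longrightarrow 0.$$
Since a functional with continuous Gateaux derivative is Fréchet differentiable of class $C^1$, this proves $J\in C^1(H^1(\Omega),\mathbb{R})$ with the stated formula.

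For weak continuity of $J_2$, if $u_n\rightharpoonup u$ in $H^1(\Omega)$ the \emph{compact} trace embedding gives $u_n\to u$ in $L^{p}(\partial\Omega)$; passing to a subsequence, $u_n\to u$ a.e.\ on $\partial\Omega$ with $|u_n|^{p}$ convergent in $L^1(\partial\Omega)$, so Vitali's (generalized dominated) convergence theorem gives $J_2(u_n)\to J_2(u)$, and the standard subsequence argument promotes this to the full sequence. Finally, compactness of $J_2'$ is the displayed estimate applied to a weakly convergent sequence: $u_n\rightharpoonup u$ implies, through the compact trace embedding and Lemma~\ref{Nem}, that $\mathbb{F}(u_n)\to\mathbb{F}(u)$ in $L^{p/(p-1)}(\partial\Omega)$, whence $J_2'(u_n)\to J_2'(u)$ strongly in $(H^1(\Omega))^{*}$; thus $J_2'$ is completely continuous and, since $H^1(\Omega)$ is reflexive, maps bounded sets to relatively compact ones. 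The only points requiring care are the bookkeeping of exponents — making sure $\mathbb{F}$ lands in the dual of a space into which the trace embedding is compact, i.e.\ that $(f_1)$ puts $p$ strictly below the critical trace exponent — and the routine but not entirely automatic Gateaux-to-Fréchet upgrade; the remainder is Lemma~\ref{Nem}, Hölder's inequality, and dominated convergence.
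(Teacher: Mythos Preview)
Your proof is correct and follows the same overall architecture as the paper's: split $J=J_1-J_2$, dispose of the quadratic part $J_1$ by bilinearity and the trace embedding, and concentrate on $J_2$ via the Nemytskii operator and the compact trace embedding. The continuity of $J_2'$, the weak continuity of $J_2$, and the compactness of $J_2'$ are handled essentially identically in both arguments (H\"older plus Lemma~\ref{Nem} plus compactness of the trace, followed by the usual sub-subsequence argument).

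The one genuine methodological difference is how Fr\'echet differentiability of $J_2$ is obtained. The paper argues directly: it fixes $u$, writes
\[
\Psi=|F(x,u+v)-F(x,u)-f(x,u)v|,
\]
decomposes $\partial\Omega$ into three regions according to whether $|u|$ or $|v|$ is large, and estimates $\int_{\partial\Omega}\Psi$ on each piece by hand using H\"older, the mean value theorem, and the trace embedding, eventually showing $\int_{\partial\Omega}\Psi\le\epsilon\|v\|_c$ for $\|v\|_c$ small. You instead compute the Gateaux derivative by dominated convergence and then invoke the standard theorem that a Gateaux differentiable map with continuous Gateaux derivative is $C^1$ in the Fr\'echet sense. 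Your route is shorter and avoids the somewhat delicate domain splitting; what the paper's direct computation buys is that it never appeals to the Gateaux-to-Fr\'echet upgrade theorem (which some readers may not have at their fingertips) and makes the role of the subcritical trace exponent explicit at every step. Both are entirely standard, and your version would be accepted without hesitation.
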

\begin{proof}[Proof of Lemma \ref{cont}]
 {Set $$J_{1}=\int_{\Omega}\nabla u\nabla v\,dx+\int_{\Omega} c(x)uv\,dx-\int_{\partial\Omega}\mu_{1}uv\,dx.$$
 Then, $J(u)=J_{1}(u)-J_{2}(u).$ It follows from assumption $iv:$, the Sobolev emdeding of $H^{1}(\Omega)$ into $L^{\frac{2N}{N-2}}(\Omega),$
 the continuity of the trace operator from $H^{1}(\Omega)$ into $L^{\frac{2(N-1)}{N-2}}$ and the H\"{o}lder inequality
 that $J$ and $J'$ are well defined. Using arguments similar to those in the proof of Proposition $B.10$ in \cite{Rab1986} one sees that
 $J_{2}$ belong to $C^{1}(H^{1}(\Omega),\mathbb{R})$ with Fr\'{e}chet derivative by the first two terms of $J'(u).$ We shall now prove that $J_{2}$
 also belong to $C^{1}(H^{1}(\Omega),\mathbb{R}),$ that it is weakly continuous and that $J_{2}'(u)$ is compact. \\
 We first prove that $J_{2}$ is Fr\'{e}chet differentiable on $H^{1}(\Omega),$ and that $J_{2}'(u)$ is continuous. For this purpose, let $u\in H^{1}(\Omega),$
 we claim that given $\epsilon>0,$ there exists $\delta=\delta(\epsilon,u)$ such that
  $$|J_{2}(u+v)-J_{2}(u)-J_{2}'(u)v|\leq\epsilon||v||_{c},$$ for all $v\in H^{1}(\Omega)$ with $||v||_{c}<\delta.$ Set
  $$\Psi:=|F(x,u+v)-F(x,u)-f(x,u)v|.$$
  It therefore follows that
  $$|J_{2}(u+v)-J_{2}(u)-J_{2}'(u)v|\leq\int_{\partial\Omega}\Psi.$$ Define
  $$S_{1}:=\{x\in \partial\Omega:|u(x)|\geq\vartheta\},$$
  $$S_{2}:=\{x\in \partial\Omega:|v(x)|\geq\kappa\},$$
  $$S_{2}:=\{x\in \partial\Omega:|u(x)|\geq\vartheta~\&~|v(x)|\geq\kappa\},$$
 where $\vartheta$ and $\kappa$ will be defined later. It then follows that
 $$\int_{\partial\Omega}\Psi\leq\sum_{i=1}^{3}\int_{S_{i}}\Psi.$$
 By the Mean Value Theorem we get that

 \begin{equation}\label{E8.01}
\begin{gathered}
F(x,\xi+\eta)-F(x,\xi)=f(x,\xi+\theta\eta)\eta,
\end{gathered}
\end{equation}
where $\theta\in(0,1).$ It follows from (\ref{E8.01}) and $iv:$ that
$$\int_{S_{1}}|F(x,u+v)-F(x,u)|\leq\int_{S_{1}}|f(x,u+\theta v)||v|$$
$$\leq\int_{S_{1}}[a_{1}+a_{2}|u+\theta v|^{s}]|v|\leq\int_{S_{1}}[a_{1}+a_{2}\left(|u|+|v|\right)^{s}]|v|,$$
where $a_1,~~a_2>0$ are constants, using H\"{o}lder inequality we obtain

$$\int_{S_{1}}|F(x,u+v)-F(x,u)|\leq a_{1}|S_{1}|^{\frac{N}{2N-2}}||v||_{L^{\frac{2N-2}{N-2}}(\partial\Omega)}+
a_{2}\left(\int_{S_{1}}|u|^{s}|v|+\int_{S_{1}}|v|^{s}|v|\right)$$
$$\leq\left[a_{1}|S_{1}|^{\frac{N}{2N-2}}+a_{2}|S_{1}|^{\frac{1}{\sigma}}\left(||u||^{s}_{L^{s+1}(\partial\Omega)}
+||v||^{2}_{L^{s+1}(\partial\Omega)}\right)\right]||v||_{L^{\frac{2N-2}{N-2}}(\partial\Omega)},$$
where
\begin{equation}\label{E8.02}
\begin{gathered}
\frac{1}{\sigma}+\frac{s}{s+1}+\frac{N-2}{2N-2}=1.
\end{gathered}
\end{equation}
Notice that $\frac{s}{s+1}+\frac{N-2}{2N-2}<1,$ so there exists a $\sigma>1$ such that (\ref{E8.02}) is satisfied. Using the continuity of the trace
operator from $H^{1}(\Omega)$ into $L^{t}(\partial\Omega)$ with $t\leq\frac{2(N-1)}{N-2},$ we obtain
\begin{equation}\label{E8.03}
\begin{gathered}
\int_{S_{1}}|F(x,u+v)-F(x,u)|\leq\left[a_1|S_{1}|^{\frac{N}{2N-2}}+a_{3}|S_{1}|^{\frac{1}{\sigma}}\left(||u||^{s}_{c}+||v||^{s}_{c}\right)\right]||v||_{c}.
\end{gathered}
\end{equation}
\begin{equation}\label{E8.04}
\begin{gathered}
\int_{S_{1}}|F(x,u+v)-F(x,u)|\leq a_4||v||_{c}\left[|S|_{1}^{\frac{N}{2N-2}}+|S_{1}|^{\frac{1}{\sigma}}\left(||u||^{s}_{c}+||v||^{s}_{c}\right)\right].
\end{gathered}
\end{equation}
Similarly
\begin{equation}\label{E8.05}
\begin{gathered}
\int_{S_{1}}|f(x,u)v|\leq a_5||v||_{c}\left[|S|_{1}^{\frac{N}{2N-2}}+|S_{1}|^{\frac{1}{\sigma}}\left(||u||^{s}_{c}+||v||^{s}_{c}\right)\right].
\end{gathered}
\end{equation}
By the continuity of the trace operator from $H^{1}(\Omega)$ into $L^{t}(\partial\Omega)$ with $t\leq\frac{2(N-1)}{N-2}$ and
H\"{o}lder inequality,
$$||u||_{c}\geq a_{6}||u||_{L^{2}({S_{1}})}\geq a_{6}\vartheta|S_{1}|^{\frac{1}{2}}.$$
Hence,
$$|S_{1}|^{\frac{1}{\sigma}}\leq\left(\frac{||u||_{c}}{a_{6}\vartheta}\right)^{\frac{2}{\sigma}}=:M_{1}$$ and
$$|S_{1}|^{\frac{N}{2(N-1)}}\leq\left(\frac{||u||_{c}}{a_{6}\vartheta}\right)^{\frac{N}{N-1}}=:M_{2},$$
$M_{1},\,M_{2}\to 0$ as $\vartheta\to\infty.$ Therefore,
$$\int_{S_1}\Psi\leq a_{7}\left[M_{2}+M_{1}\left(||u||^{s}_{c}+||v||^{s}_{c}\right)\right]||u||_{c}.$$
We can assume $\delta\leq 1$ and choose $\vartheta$ large such that
$$a_{7}\left[M_{2}+M_{1}\left(||u||^{s}_{c}+||v||^{s}_{c}\right)\right]\leq\frac{\epsilon}{3}.$$
Hence, $$\int_{S_{1}}\Psi\leq\frac{\epsilon}{3}||v||_{c}.$$
Similarly
$$\int_{S_{2}}\Psi\leq a_3\int_{S_2}[1+(|u|+|v|)^{2}]|v|$$
$$\leq a_{4}(1+||u|^{s}_{c}+||v||^{s}_{c})\left(\int_{S_{2}}|v|^{s+1}\left(\frac{|v|}{\kappa}\right)^{m-(s+1)}\right)^{\frac{1}{s+1}}\,\,{\rm with}\,
\,m=\frac{2(N-1)}{N-2}$$
$$\leq a_{6}\kappa^{\frac{s+1-m}{s+1}}(1+||u|^{s}_{c}+||v||^{s}_{c})||v||^{\frac{m}{s+1}}_{L^{m}(S_{2})}$$
$$\leq a_{6}\kappa^{\frac{s+1-m}{s+1}}(1+||u|^{s}_{c}+||v||^{s}_{c})||v||^{\frac{m}{s+1}}_{c}. $$
Since $F\in C^{1}(\bar{\Omega}\times\mathbb{R}),$ given any $\hat\epsilon,\,\hat\vartheta>0,$ there exists a
$\hat\kappa=\hat\kappa(\hat\epsilon,\hat\vartheta)$} such that
$$|F(x,\xi+h)-F(x,\xi)-f(x,\xi)h|\leq\hat\epsilon|h|$$
for all $x\in\partial\Omega,$ $|\xi|\leq\hat\vartheta,$ and $|h|\leq\hat\kappa.$ In particular if $\hat\vartheta=\vartheta$ and $\hat\kappa=\kappa,$
this implies
$$\int_{S_{3}}\Psi\leq\hat\epsilon\int_{S_{3}}|v|\leq a_{7}\epsilon||v||_{L^{1}(S_{3})}\leq a_{7}\epsilon||v||_{c}.$$
Choose $\hat\epsilon$ such that. This determines $\hat\kappa=\kappa.$ It follows
$$\int_{\partial\Omega}\Psi\leq\frac{2\epsilon}{3}||v||_{c}+a_{6}\kappa^{\frac{s+1-m}{s+1}}(1+||u||_{c}^{s}+||v||^{s}_{c})||v||_{c}^{\frac{m}{s+1}}.$$
Choose $\delta$ small so that $a_{6}\kappa^{\frac{s+1-m}{s+1}}(1+||u||_{c}^{s}+||v||^{s}_{c})\delta^{\frac{m}{s+1}})\leq\frac{\epsilon}{3}.$
Now, we shall prove that $J_{2}'(u)$ is continuous, let $u_{m}\to u$ in $H^{1}(\Omega),$ then by using H\"{o}lder inequality and the continuity of the trace
operator from $H^{1}(\Omega)$ into $L^{t}(\partial\Omega)$ with $t\leq \frac{2(N-1)}{N-2},$ we get
$$||J_{2}'(u_m)-J_{2}(u)||=\sup_{||v||_{c}\leq1}\left|\int_{\partial\Omega}f(x,u_{m})v-f(x,u)v\right|$$
$$\leq\sup_{||v||_{c}\leq1}\int_{\partial\Omega}|f(x,u_{m})v-f(x,u)||v|$$
$$\leq||f(.,u_m)-f(.,u)||_{L^{\frac{s+1}{s}}(\partial\Omega)}||v||_{L^{s+1}(\partial\Omega)}$$
$$\leq C||f(.,u_m)-f(.,u)||_{L^{\frac{s+1}{s}}(\partial\Omega)}.$$
By taking into account condition $iv:$ and Lemma \ref{Nem}, we see that the right-hand of the above inequality tends to zero  as
$m\to\infty.$ Hence, $J_{2}'$ is continuous. Now let us prove that $J_{2}$ is weakly continuous. Let $u_{n}\rightharpoonup u$ in $H^{1}(\Omega),$
it follows that $||u_{n}||_{c}< C.$ By the compactness of the trace operator, there exists a subsequence $u_{n_{k}}\to u$ in $L^{s+1}(\partial\Omega).$
$$|J_{2}(u_{n_{k}})-J_{2}(u)|\leq\int_{\partial\Omega}|f(x,\xi_{n_k})|u_{n_k}-u|~~\,{\rm by\, the \,Mean\, Value\, Theorem}$$
$$\leq||f(.,u_{n_k}||_{L^{\frac{s+1}{s}}(\partial\Omega)}||u_{n_k}-u||_{L^{s+1}(\partial\Omega)}~\,{\rm by\, H\"{o}lder \,inequality.}$$
Therefore, by Lemma \ref{Nem} we get that $J_{2}(u_{n_k})\to J_{2}(u).$ We claim that\\ $J_{2}(u_{n})\to J_{2}(u),$
hence $J_{2}(u_{n})\to J_{2}(u).$ Suppose by contradiction that $J_{2}(u_{n})\not\to J_{2}(u),$ then there exists a subsequence $\{u_{n_j}\}$ such that
$|J_{2}(u_{n_{j}})-J_{2}(u)|\geq\epsilon.$ But the sequence $\{u_{n_j}\}$ has a subsequence (we call again $\{u_{n_j}\}$) which convergent to $u$
in $L^{s+1}(\partial\Omega)$ and $J_{2}(u_{n_j})\to J_{2}(u).$ This leads to a contradiction. Thus, $J_{2}(u_{n})to J_{2}(u).$\\
Finally, let us prove that $J_{2}'$ is compact. Let $\{u_{n}\}$ be a bounded sequence in $H^{1}(\Omega)$, then there exists a subsequence
$u_{n_k}\rightharpoonup u$ in $H^{1}(\Omega).$ Therefore, $u_{n_{k}}\to u$ in $L^{s+1}(\partial\Omega).$ Then,
$$||J_{2}'(u_{n_k})-J_{2}'(u)||\leq C||f(.,u_{n_k})-f(.,u)||_{L^{\frac{s+1}{s}}}.$$
By Lemma \ref{Nem} we get that $J_{2}'(u_{n_k})\to J_{2}'(u).$ Thus, $J_{2}'$ is compact.
\end{proof}
\begin{definition}
 A weak solution of the equation \ref{E1}, we mean a function $u\in H^{1}(\Omega),$ such that
 \begin{equation}\label{E9}
\begin{gathered}
\int_{\Omega}\nabla u\nabla v\,dx+\int_{\Omega} c(x)uv\,dx-\int_{\partial\Omega}\mu_{1}uv\,dx-\int_{\partial\Omega}f(x,u)v\,dx\,=0,~~\forall v\in H^{1}(\Omega)
\end{gathered}
\end{equation}
\end{definition}
\begin{proposition}\label{auxiliar0}
Suppose $(SSR),$ $(BH1),$ $(BH2)$ hold. Then the functional $J$ admits the following Mountain Pass Geometry:
\begin{description}
  \item[$i)$] There exists $\rho > 0$ and $\alpha > 0$ such that
  $$J(u) \geq \rho \,\, \mbox{for any} \,\, u \in H^{1}(\Omega) \,\, \mbox{such that} \,\, \|u\|_{\partial} = \alpha.$$
  \item[$ii)$] There exists $e \in H^{1}(\Omega)$ such that $J(e) < 0$ and $\|e\|_{\partial} > \alpha$.
\end{description}
\end{proposition}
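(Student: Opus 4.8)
To prove Proposition~\ref{auxiliar0}, I would handle $ii)$ directly and reduce $i)$ to showing that
$m(\alpha):=\inf\{J(u):\ u\in H^{1}(\Omega),\ \|u\|_{\partial}=\alpha\}$ is strictly positive for all small $\alpha>0$; then $i)$ follows with $\rho:=m(\alpha)$. For $ii)$ the natural candidate is $e:=a^{+}\varphi_{1}$ with $a^{+}>0$ given by $(BH2)$: since $\varphi_{1}$ is a first Steklov eigenfunction we have $\|\varphi_{1}\|_{c}^{2}=\mu_{1}\|\varphi_{1}\|_{\partial}^{2}$ (the equality case of \eqref{10}), so the quadratic part of $J$ vanishes on $\mathbb{R}\varphi_{1}$ and $J(e)=-\int_{\partial\Omega}F(x,a^{+}\varphi_{1})\,dx<0$ by $(BH2)$. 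The remaining requirement $\|e\|_{\partial}>\alpha$ will be secured by choosing $\alpha$ sufficiently small at the end.

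For $i)$, the first step is to observe that $m(\alpha)$ is attained. By $(SSR)$ we have $\bigl|\int_{\partial\Omega}F(x,u)\,dx\bigr|\le\Lambda:=\|\tilde F\|_{L^{1}(\partial\Omega)}<\infty$ for every $u$, hence on the sphere $\{\|u\|_{\partial}=\alpha\}$ one gets $J(u)\ge\frac12\|u\|_{c}^{2}-\frac{\mu_{1}}{2}\alpha^{2}-\Lambda$, so the sublevel sets of $J$ restricted to this sphere are bounded in $H^{1}(\Omega)$. Because the trace operator into $L^{2}(\partial\Omega)$ is compact (as already exploited in the proof of Lemma~\ref{cont}), the sphere is weakly sequentially closed; and $J$ is weakly lower semicontinuous, since $u\mapsto\|u\|_{c}^{2}$ is weakly lower semicontinuous while $J_{2}$ and $u\mapsto\|u\|_{\partial}^{2}$ are weakly continuous. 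By the direct method the infimum $m(\alpha)$ is therefore attained.

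Next I would argue by contradiction: assume $m(\alpha_{n})\le0$ along some $\alpha_{n}\downarrow0$, and pick minimizers $u_{n}$, so $\|u_{n}\|_{\partial}=\alpha_{n}$ and $J(u_{n})\le0$. The lower bound above forces $\|u_{n}\|_{c}^{2}\le\mu_{1}\alpha_{n}^{2}+2\Lambda$, so $\{u_{n}\}$ is bounded; up to a subsequence, $u_{n}\rightharpoonup u_{0}$ in $H^{1}(\Omega)$ and $u_{n}\to u_{0}$ in $L^{t}(\partial\Omega)$ for all $t$ below the critical trace exponent, whence $\|u_{0}\|_{\partial}=\lim\alpha_{n}=0$, and by weak continuity of $J_{2}$ together with $f(\cdot,0)=0$ we get $J_{2}(u_{n})\to J_{2}(u_{0})=\int_{\partial\Omega}F(x,0)\,dx=0$. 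Writing $\frac12\|u_{n}\|_{c}^{2}=J(u_{n})+\frac{\mu_{1}}{2}\alpha_{n}^{2}+J_{2}(u_{n})\le\frac{\mu_{1}}{2}\alpha_{n}^{2}+J_{2}(u_{n})$ and letting $n\to\infty$ yields $\|u_{n}\|_{c}\to0$. Now I would invoke $(BH1)$: there exist $\delta,r>0$ with $F(x,u)\le-\frac{\delta}{2}u^{2}$ for $|u|\le r$, a.e.\ $x\in\partial\Omega$. Splitting $\partial\Omega=A_{n}\cup B_{n}$ with $A_{n}=\{|u_{n}|\le r\}$ and using $\int_{A_{n}}u_{n}^{2}=\alpha_{n}^{2}-\int_{B_{n}}u_{n}^{2}$, one has
$$\int_{\partial\Omega}F(x,u_{n})\,dx\le-\frac{\delta}{2}\alpha_{n}^{2}+\frac{\delta}{2}\int_{B_{n}}u_{n}^{2}+\int_{B_{n}}|F(x,u_{n})|\,dx.$$
Since $|B_{n}|\le r^{-2}\alpha_{n}^{2}$ by Chebyshev, and using $(f_{1})$, H\"older's inequality, the trace inequality $\|u_{n}\|_{L^{p_{0}}(\partial\Omega)}\le C\|u_{n}\|_{c}$ for a suitable subcritical trace exponent $p_{0}>\max\{2,p\}$, the inequality $\mu_{1}\alpha_{n}^{2}\le\|u_{n}\|_{c}^{2}$ coming from \eqref{10}, and $\|u_{n}\|_{c}\to0$, one checks that $\int_{B_{n}}u_{n}^{2}=o(\alpha_{n}^{2})$ and $\int_{B_{n}}|F(x,u_{n})|\,dx=o(\alpha_{n}^{2})$. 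Consequently $-\int_{\partial\Omega}F(x,u_{n})\,dx\ge\frac{\delta}{2}\alpha_{n}^{2}-o(\alpha_{n}^{2})$, and combining with $\frac12\|u_{n}\|_{c}^{2}\ge\frac{\mu_{1}}{2}\alpha_{n}^{2}$ gives $J(u_{n})\ge\frac{\delta}{2}\alpha_{n}^{2}-o(\alpha_{n}^{2})>0$ for large $n$, contradicting $J(u_{n})\le0$. Hence there is $\alpha_{0}>0$ with $m(\alpha)>0$ for all $\alpha\in(0,\alpha_{0}]$, and $i)$--$ii)$ follow by taking $\alpha:=\min\{\alpha_{0},\frac12|a^{+}|\,\|\varphi_{1}\|_{\partial}\}$, $\rho:=m(\alpha)$ and $e=a^{+}\varphi_{1}$.

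I expect the last step to be the main obstacle. Near the origin $J$ is degenerate along the line $\mathbb{R}\varphi_{1}$, so it cannot be bounded below there by a multiple of $\|u\|_{c}^{2}$; the argument must extract the genuinely coercive term $\frac{\delta}{2}\alpha_{n}^{2}$ from $(BH1)$ and, crucially, show that the ``bad set'' $B_{n}=\{|u_{n}|>r\}$ — where $(BH1)$ gives no information — contributes only $o(\alpha_{n}^{2})$. This is exactly what forces one to work with the \emph{attained} minimizer (so that $J(u_{n})\le0$, not merely $J(u_{n})\le o(1)$, which would not close the argument) and to use $\|u_{n}\|_{c}\to0$ together with $\mu_{1}\alpha_{n}^{2}\le\|u_{n}\|_{c}^{2}$; the role of $(SSR)$ here is precisely to make the infimum over the sphere finite and attained, while $(BH2)$ only enters in $ii)$.
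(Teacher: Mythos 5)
Your architecture for $i)$ --- minimize $J$ on the trace sphere $\{\|u\|_{\partial}=\alpha\}$, use $(SSR)$ for coercivity and weak lower semicontinuity to obtain an attained minimizer, then run a contradiction argument along $\alpha_{n}\downarrow 0$ --- is sound and is genuinely different from the paper's proof, which attempts a one-line pointwise estimate $F(x,u)\leq\frac{\gamma}{2}|u|^{2}+C|u|^{q}$ with $\gamma>0$ and concludes directly (an estimate whose displayed lower bound $\left(-\frac{\gamma}{2}-C\|u\|_{\partial}^{q-2}\right)\|u\|_{\partial}^{2}$ is negative, so the paper's own argument does not prove positivity as written). However, the decisive step of your proof is precisely the one you flag as the main obstacle, and the ingredients you list do not close it. H\"{o}lder and Chebyshev give $\int_{B_{n}}u_{n}^{2}\leq|B_{n}|^{1-2/p_{0}}\|u_{n}\|^{2}_{L^{p_{0}}(\partial\Omega)}\leq C\alpha_{n}^{2-4/p_{0}}\|u_{n}\|_{c}^{2}$, so after dividing by $\alpha_{n}^{2}$ you are left with $C\alpha_{n}^{-4/p_{0}}\|u_{n}\|_{c}^{2}$, and you need $\|u_{n}\|_{c}^{2}=o(\alpha_{n}^{4/p_{0}})$. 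The inequality \eqref{10} that you invoke, $\mu_{1}\alpha_{n}^{2}\leq\|u_{n}\|_{c}^{2}$, bounds $\alpha_{n}$ by $\|u_{n}\|_{c}$ --- the wrong direction --- and $\|u_{n}\|_{c}\to 0$ carries no rate. The same difficulty affects $\int_{B_{n}}|F(x,u_{n})|\,dx$. As stated, ``one checks that'' would fail.

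The gap is repairable, and the missing ingredient is the reversed comparison $\|u_{n}\|_{c}\leq C\alpha_{n}$. From $J(u_{n})\leq 0$ together with $F(x,u_{n})\leq 0$ on $A_{n}$ (which $(BH1)$ gives you) one obtains $\|u_{n}\|_{c}^{2}\leq\mu_{1}\alpha_{n}^{2}+2\int_{B_{n}}|F(x,u_{n})|\,dx$; bounding the last integral via $(f_{1})$, Chebyshev and H\"{o}lder by $C\alpha_{n}^{2-2s/p_{0}}\|u_{n}\|_{c}^{s}$ with $s=\max\{2,p\}<p_{0}$, and noting that $\alpha_{n}^{2-2s/p_{0}}\|u_{n}\|_{c}^{s-2}\to 0$, you can absorb this term and conclude $\|u_{n}\|_{c}^{2}\leq 2\mu_{1}\alpha_{n}^{2}$ for $n$ large. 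With that rate in hand both of your $o(\alpha_{n}^{2})$ claims do follow (the exponents work out exactly because $p_{0}>\max\{2,p\}$), and the remainder of your argument, including part $ii)$ with $e=a^{+}\varphi_{1}$ and the final shrinking of $\alpha$, is correct --- indeed your choice $e=a^{+}\varphi_{1}$ is the one actually licensed by $(BH2)$, whereas the paper takes $t$ ``big enough.'' One caveat you inherit from the hypotheses: $(f_{1})$ only requires $p<\frac{N+2}{N-2}$, which for $N=3$ may exceed the critical trace exponent $\frac{2(N-1)}{N-2}$, in which case a subcritical trace exponent $p_{0}>\max\{2,p\}$ need not exist; you should either restrict $p$ or say explicitly that $p$ is subcritical for the trace embedding.
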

\begin{proof}[Proof of Proposition \ref{auxiliar0}]
First we show $i).$ From $(BH1)$ we have that
$$\limsup_{u\to 0} \frac{f(x,u)}{u} <0< \gamma.$$
This implies that
$$F(x,u)\leq \frac{\gamma}{2}|u|^{2}+C|u|^{q} \, u\in\mathbb{R},\,x\in\partial\Omega.$$
for some $q\in(2,\frac{N+2}{N-2})$ and $C>0$
where $0<\gamma<\mu_{1}.$ By  Sobolev's embedding and the trace theorem we have that
$$J(u)\geq\frac{1}{2}\big[\int_{\Omega} | \nabla u|^{2} \,dx+\int_{\Omega} c(x) u^{2}\,dx-\int_{\partial\Omega}\mu_{1} u^{2}\,dx\big]-
\frac{\gamma}{2} \int_{\partial\Omega} u^{2}\,dx- C \int_{\partial\Omega} u^{q}\,dx,$$
from Lemma \ref{lmac} we have that
$$\int_{\Omega} | \nabla u|^{2} \,dx+\int_{\Omega} c(x) u^{2}\,dx-\int_{\partial\Omega}\mu_{1} u^{2}\,dx \geq 0,$$ so we obtain
$$J(u)\geq - \frac{\gamma}{2} \int_{\partial\Omega} u^{2}\,dx- C \int_{\partial\Omega} u^{q}\,dx$$
$$=- \frac{\gamma}{2}||u||_{\partial}^{2}-C||u||^{q}_{\partial}$$
$$=\left(-\frac{\gamma}{2}-C||u||^{q-2}_{\partial}\right)||u||_{\partial}^{2}$$
for all $u \in H^{1}(\Omega).$ Putting $||u||_{\partial}=\alpha$ with $\alpha > 0$ small enough. The proof of $i)$ is complete.\\
Second we show $ii).$ Let $e=t\varphi_{1}$ where $\varphi_{1}$ the first eigenfunction for the Steklov problem with ($c\neq 0$),
and $t\in\mathbb{R}\setminus\{0\},$ for any $t$ big enough such that $||e||_{\partial}> \alpha,$ clearly $e \in H^{1}(\Omega).$\\
Now we show that $J(e)<0$
$$J(e)=J(t\varphi_{1})=\frac{t^{2}}{2}\left(\cancel{||\varphi_{1}||_{c}^{2}-\mu_{1}||\varphi_{1}||_{\partial}^{2}}^{0}\right)-\int_{\partial\Omega}F(x,t\varphi_{1})\,dx.$$
 So, we have that
 $$J(e)=-\int_{\partial\Omega}F(x,t\varphi_{1})\,dx.$$
 By $(BH2),$ we have that
  $$J(e)<0.$$ The proof of $ii)$ is complete.
\end{proof}

\begin{proposition}\label{auxiliar}
Suppose $(SSR)$. Then the functional $J$ is bounded from below. Moreover, the value
$$c_{inf} = \inf_{u \in H^{1}(\Omega)} J(u)$$
is a critical value of $J$, i.e.; there exists $u_{0} \in H^{1}(\Omega)$ such that $J(u_{0}) = c_{inf}$ and $J^{\prime}(u_{0}) \equiv 0$.
\end{proposition}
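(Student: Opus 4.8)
The plan is to establish the two assertions separately. Boundedness from below is immediate: write $J(u)=\tfrac12\bigl(\|u\|_c^{2}-\mu_1\|u\|_\partial^{2}\bigr)-\int_{\partial\Omega}F(x,u)\,dx$; by Lemma~\ref{lmac}~(i) the quadratic term is nonnegative for every $u\in H^1(\Omega)$, while $(SSR)$ gives $|F(x,u)|\le\tilde F(x)$ with $\tilde F\in L^q(\partial\Omega)\subset L^1(\partial\Omega)$ since $\partial\Omega$ has finite measure and $q\ge1$; hence $J(u)\ge-\|\tilde F\|_{L^1(\partial\Omega)}$ for all $u$, so $c_{inf}=\inf_{H^1(\Omega)}J$ is a finite real number. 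It remains to produce a minimizer, which — being a global minimum — is automatically a critical point.

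To this end take a minimizing sequence $(u_n)\subset H^1(\Omega)$, $J(u_n)\to c_{inf}$, and decompose $u_n=t_n\varphi_1+w_n$ along the $c$- and $L^2(\partial\Omega)$-orthogonal spectral splitting $H^1(\Omega)=\mathbb{R}\varphi_1\oplus W$, $W=\bigoplus_{i\ge2}E(\mu_i)$. Using $\|\varphi_1\|_c^{2}=\mu_1\|\varphi_1\|_\partial^{2}$ (Lemma~\ref{lmac}~(i)) together with the orthogonality relations, the quadratic part of $J(u_n)$ reduces to $\tfrac12\bigl(\|w_n\|_c^{2}-\mu_1\|w_n\|_\partial^{2}\bigr)$, and by the spectral gap $\mu_2>\mu_1$ and Lemma~\ref{lmac}~(ii) this is bounded below by $\tfrac12\bigl(1-\mu_1/\mu_2\bigr)\|w_n\|_c^{2}$. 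Since $J(u_n)\le c_{inf}+1$ for large $n$ and $-\int_{\partial\Omega}F(x,u_n)\,dx\ge-\|\tilde F\|_{L^1(\partial\Omega)}$, the sequence $(w_n)$ is bounded in $H^1(\Omega)$. Hence either $(t_n)$ is bounded — so $(u_n)$ is bounded — or, along a subsequence, $|t_n|\to\infty$.

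In the bounded case, pass to a subsequence with $u_n\rightharpoonup u_0$ in $H^1(\Omega)$. The functional $u\mapsto\|u\|_c^{2}$ is weakly lower semicontinuous; the trace operator $H^1(\Omega)\hookrightarrow L^2(\partial\Omega)$ is compact, so $u_n\to u_0$ in $L^2(\partial\Omega)$ and $\|u_n\|_\partial^{2}\to\|u_0\|_\partial^{2}$; and $J_2(u)=\int_{\partial\Omega}F(x,u)\,dx$ is weakly continuous by Lemma~\ref{cont}. Therefore $c_{inf}=\lim_nJ(u_n)\ge\tfrac12\|u_0\|_c^{2}-\tfrac12\mu_1\|u_0\|_\partial^{2}-J_2(u_0)=J(u_0)\ge c_{inf}$, whence $J(u_0)=c_{inf}$ and, $u_0$ being a global minimum, $J'(u_0)=0$.

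The resonant case $|t_n|\to\infty$ — say $t_n\to+\infty$, the other sign being symmetric — is the main obstacle. After a further subsequence $w_n\rightharpoonup w$ in $H^1(\Omega)$ and $w_n\to w$ a.e.\ on $\partial\Omega$; since $\varphi_1>0$ on $\partial\Omega$, it follows that $u_n(x)=t_n\varphi_1(x)+w_n(x)\to+\infty$ for a.e.\ $x\in\partial\Omega$. Here $(SSR)$ is essential: because $F(x,\cdot)$ remains dominated by the fixed $L^1$ weight $\tilde F$ while $f(x,\cdot)\to0$, one may pass to the limit in $\int_{\partial\Omega}F(x,u_n)\,dx$, identify $c_{inf}$ with the limiting value of $J$ along the ray $t\varphi_1$, and — using the nonnegativity of the quadratic form — deduce $\|w_n\|_c\to0$, so that the minimization reduces to the scalar problem for the continuous bounded function $t\mapsto J(t\varphi_1)=-\int_{\partial\Omega}F(x,t\varphi_1)\,dx$. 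The delicate point is then to show that this scalar infimum, which equals $c_{inf}$, is attained at a finite $t_0$, giving the minimizer $u_0=t_0\varphi_1$ with $J'(u_0)=0$; this is precisely where the strong-resonance structure does its work. Alternatively, once a compactness property such as the Cerami condition for $J$ is available, the case distinction can be dispensed with entirely: apply Ekeland's variational principle to the minimizing sequence to obtain $(v_n)$ with $J(v_n)\to c_{inf}$ and $\|J'(v_n)\|(1+\|v_n\|)\to0$, and extract a strongly convergent subsequence, whose limit is simultaneously a global minimizer and a critical point of $J$.
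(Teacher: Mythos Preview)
Your argument for boundedness from below is correct and considerably more direct than the paper's: the paper argues by contradiction, normalizes $v_n=u_n/\|u_n\|$, shows $v_n\to t\phi_1$ strongly, decomposes $u_n=t_n\phi_1+w_n$, proves $\|w_n\|\to0$, and only then reaches a contradiction with the $L^1$ bound on $F$. Your one-line observation that $\|u\|_c^{2}-\mu_1\|u\|_\partial^{2}\ge0$ by Lemma~\ref{lmac}(i) together with $|F(x,u)|\le\tilde F(x)$ gives $J(u)\ge-\|\tilde F\|_{L^{1}(\partial\Omega)}$ immediately; this is a genuine simplification.

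For the existence of the minimizer, however, your direct route in the resonant case $|t_n|\to\infty$ does not close. Two steps are not justified: first, the claim that $\|w_n\|_c\to0$ requires knowing that $\int_{\partial\Omega}F(x,u_n)\,dx$ converges, but $(SSR)$ only gives a dominating function, not a pointwise limit of $F(x,\cdot)$ at infinity, so dominated convergence does not apply as stated. Second, and more seriously, even after reducing to the scalar map $t\mapsto-\int_{\partial\Omega}F(x,t\varphi_1)\,dx$, this bounded continuous function need not attain its infimum under $(SSR)$ alone; take for instance $F(x,u)=\arctan(u^{2})$, which satisfies $(SSR)$ yet yields a scalar functional strictly decreasing to an unattained limit. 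So the ``delicate point'' you flag is in fact a genuine obstruction, not merely a technicality.

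Your fallback---Ekeland's principle plus the Cerami condition---is exactly what the paper does, and is the only part of the paper's proof that actually establishes the minimizer. Note that the Cerami condition (Proposition~\ref{cerami}) requires the additional hypothesis $(HOC)^{+}$ or $(HOC)^{-}$, which the statement of Proposition~\ref{auxiliar} omits but its proof tacitly invokes; your proposal inherits the same hidden dependence.
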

\begin{proof}
First of all, we shall prove that $J$ is bounded from below. The proof for this assertion follows arguing by contradiction. Consider a sequence $(u_{n}) \in H^{1}(\Omega)$ in such way that $J(u_{n}) \rightarrow - \infty$ as $n \rightarrow \infty$. Thus we obtain $\|u_{n}\| \rightarrow \infty$. Without any loss of generality we suppose that $J(u_{n}) \leq 0, n \in \mathbb{N}$. Under this condition we have
\begin{equation}\label{FF}
 J(u_{n}) = \int_{\Omega} |\nabla u_{n}|^{2} dx + \int_{\Omega} c(x)u^{2}_{n}dx - \mu_{1} \int_{\partial \Omega} u_{n}^{2}ds - \int_{\partial \Omega} F(x,u_{n}) ds \leq 0.
\end{equation}
Define $v_{n} = \frac{u_{n}}{\|u_{n}\|}$. Hence there exists $v \in H^{1}(\Omega)$ such that $v_{n} \rightharpoonup v$ in $H^{1}(\Omega)$. Dividing the inequality in \eqref{FF} we obtain
\begin{equation}\label{r1}
 \int_{\Omega} |\nabla v_{n}|^{2} dx + \int_{\Omega} c(x)v^{2}_{n}dx - \mu_{1} \int_{\partial \Omega} v_{n}^{2}ds - \int_{\partial \Omega} \dfrac{F(x,u_{n})}{u_{n}^{2}} v_{n}^{2}ds \leq 0.
\end{equation}
Using $(SSR)$ and Lebesgue Dominated Convergence Theorem we have
\begin{equation}\label{w4}
\lim_{n\rightarrow \infty} \int_{\partial \Omega} \dfrac{F(x,u_{n})}{u_{n}^{2}} v_{n}^{2}ds = 0.
\end{equation}
This together with \eqref{r1} imply that
\begin{equation}
 \lim_{n\rightarrow \infty} \left\{\int_{\Omega} |\nabla v_{n}|^{2} dx + \int_{\Omega} c(x)v^{2}_{n}dx - \mu_{1} \int_{\partial \Omega} v_{n}^{2}ds \right\} \leq 0.
\end{equation}
Thanks to variational inequality for $\mu_{1}$  we also that
\begin{equation}
 \lim_{n\rightarrow \infty} \left\{\int_{\Omega} |\nabla v_{n}|^{2} dx + \int_{\Omega} c(x)v^{2}_{n}dx - \mu_{1} \int_{\partial \Omega} v_{n}^{2}ds \right\} = 0.
\end{equation}.

Now using the compact Sobolev embedding we see that
\begin{equation}
\lim_{n\rightarrow \infty} \int_{\Omega} c(x)v^{2}_{n}dx =   \int_{\Omega} c(x)v^{2}dx,  \lim_{n\rightarrow \infty} \int_{\partial \Omega} v_{n}^{2}ds =
\int_{\partial \Omega} v^{2}ds.
\end{equation}
Furthermore, using the fact that the norm is weakly lower semicontinuous we have that
\begin{eqnarray}
\|v\|^{2} \leq \liminf_{n \rightarrow \infty} \|v_{n}\|^{2} &=& \liminf_{n \rightarrow \infty} \int_{\Omega} |\nabla v_{n}|^{2} dx + \int_{\Omega} c(x)v^{2}_{n}dx \nonumber \\
&=& \lim_{n \rightarrow \infty} \mu_{1} \int_{\partial \Omega} v_{n}^{2}ds = \mu_{1} \int_{\partial \Omega} v^{2}ds \nonumber \\
&\leq&  \int_{\Omega} |\nabla v|^{2} dx + \int_{\Omega} c(x)v^{2}dx = \|v\|^{2}.
\end{eqnarray}
In particular, we see that
\begin{equation}
\|v\|^{2} = \limsup_{n \rightarrow \infty} \|v_{n}\|^{2}
\end{equation}
and $v_{n} \rightarrow v$ in $H^{1}(\Omega)$. So $v = t \phi_{1}$ for some $t \in \mathbb{R}$. After that, using that $(v_{n})$ is normalized, the inequality \eqref{r1} says also that
\begin{equation}
\mu_{1} \int_{\partial \Omega} v_{n}^{2} \geq 1 + o_{n}(1), \, n \in \mathbb{N}.
\end{equation}
This ensures that $ \displaystyle \int_{\partial \Omega} v^{2} \geq \frac{1}{\mu_{1}} > 0$. Thus we have been showed that $v \neq 0$.

Now we shall write $u_{n} = t_{n} \phi_{1} + w_{n}$ where $t_{n} \in \mathbb{R}, w_{n} \in \bigoplus_{j = 2}^{\infty} E(\mu_{j})$. According to \eqref{r1}
and variational inequality for $\mu_{2}$ it follows that
\begin{eqnarray}
0 \leq \left( 1 - \dfrac{\mu_{1}}{\mu_{2}}\right) \|w_{n}||^{2} &\leq& \|w_{n}\|^{2} - \mu_{1} \int_{\partial \Omega} w_{n}^{2} ds \nonumber \\
&=& \|u_{n}\|^{2} - \mu_{1} \int_{\partial \Omega} u_{n}^{2} ds \leq \int_{\partial \Omega} \dfrac{F(x,u_{n})}{u_{n}^{2}} v_{n}^{2} ds. \nonumber \\
\end{eqnarray}
Using one more time \eqref{w4} we easily see that
$\|w_{n}|| \rightarrow 0$ as $n \rightarrow \infty$. As a consequence
\begin{equation}
\lim_{n \rightarrow \infty} \int_{\Omega} |\nabla u_{n}|^{2} dx + \int_{\Omega} c(x)u^{2}_{n}dx - \mu_{1} \int_{\partial \Omega} u_{n}^{2}ds = 0.
\end{equation}
In addition,  we easily see that
\begin{equation}\label{w5}
\lim_{n \rightarrow \infty} \int_{\partial \Omega} F(x, u_{n}) dx = +\infty.
\end{equation}

Now we mention that $(SSR)$ implies that
\begin{equation}
\left|F(x, u_{n}) \right| \leq  \tilde{F}(x), x \in \partial \Omega.
\end{equation}
In particular, we have
\begin{equation}
\lim_{n \rightarrow \infty } \left|\int_{\partial \Omega} F(x, u_{n}) dx \right| \leq \int_{\partial \Omega} \tilde{F}(x) dx < \infty.
\end{equation}
Thus we have a contradiction with \eqref{w5} proving that $J$ is bounded from below.

Later on, we shall prove that $J$ satisfies the Cerami condition, see Proposition \ref{cerami}. Since $J$ is bounded from below is standard from Ekeland's Variational Principe to show that $c_{\inf}$ is a critical value. Hence there exists $u_{0} \in H^{1}(\Omega)$ such that $J^{\prime}(u_{0}) \equiv 0$ and $J(u_{0}) = c_{\inf}$. This completes the proof.
\end{proof}

Now we define the sets
\begin{equation}
A^{+} = \left\{ t \phi_{1} + w : t \geq 0, w \in \bigoplus_{j =2}^{\infty} E(\mu_{j}) \right\}
\end{equation}
and
\begin{equation}
A^{-} = \left\{ t \phi_{1} + w : t \leq 0, w \in \bigoplus_{j =2}^{\infty} E(\mu_{j}) \right\}.
\end{equation}
The sets $A^{+}, A^{-}$ are nonempty and $A^{+} \cap A^{-} = \bigoplus_{j =2}^{\infty} E(\mu_{j})$. Now we shall minimize the functionals $J|_{A^{+}}$ and $J|_{A^{-}}$ proving that $J$ admits two distinct critical points.

\begin{proposition}\label{auxiliar1}
Suppose $(SSR)$ and $(BH1),(BH2), (BH2)^{\prime}$. Assume also $(HOC)^{-}$. Then the functional $J$ is bounded from below over the sets $A^{+}$ and $A^{-}$. Furthermore, the values
$$c^{+} = \inf_{u \in A^{+}} J(u), c^{-} = \inf_{u \in A^{-}} J(u) $$
are two critical values of $J$. Hence the functional $J$ admits two distinct critical points.
\end{proposition}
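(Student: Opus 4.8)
The plan is to realise $c^{+}$ and $c^{-}$ as minima of $J$ over the closed convex ``half--spaces'' $A^{+}$ and $A^{-}$, to show that these minimizers lie in the \emph{interior} of $A^{\pm}$ (so that they are genuine critical points of $J$ on $H^{1}(\Omega)$, not merely constrained ones), and to separate them by the sign of their $\phi_{1}$-component. Boundedness from below of $J$ over $A^{+}$ and $A^{-}$ is immediate from Proposition \ref{auxiliar}, so $c^{\pm}\in\mathbb{R}$. Since $A^{+}$ is a closed subset of the Hilbert space $H^{1}(\Omega)$ it is a complete metric space, so I would apply Ekeland's Variational Principle in its quantitative (Cerami) form to obtain a minimizing sequence $(u_{n})\subset A^{+}$ with $J(u_{n})\to c^{+}$ and $J(v)\ge J(u_{n})-\varepsilon_{n}\|v-u_{n}\|_{c}$ for all $v\in A^{+}$, where $\varepsilon_{n}(1+\|u_{n}\|_{c})\to 0$. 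Taking $v=u_{n}+\tau(z-u_{n})$ with $z\in A^{+}$ and letting $\tau\downarrow 0$ yields $J'(u_{n})(z-u_{n})\ge-\varepsilon_{n}\|z-u_{n}\|_{c}$ for every $z\in A^{+}$; testing with $z=0$ and $z=2u_{n}$ (both admissible, since the $\phi_{1}$-component of $u_{n}$ is $\ge 0$) gives $|J'(u_{n})u_{n}|=o(1)$, and testing with $z=u_{n}\pm w$ for $w\in\bigoplus_{j\ge 2}E(\mu_{j})$ gives $|J'(u_{n})w|\le\varepsilon_{n}\|w\|_{c}$.

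The core of the argument, and the step I expect to be the main obstacle, is the compactness of $(u_{n})$, where $(SSR)$ and $(HOC)^{-}$ enter, following the lines of Propositions \ref{auxiliar} and \ref{cerami}. Subtracting $2J(u_{n})$ from $J'(u_{n})u_{n}$ gives $\int_{\partial\Omega}\big(2F(x,u_{n})-u_{n}f(x,u_{n})\big)\,ds\to-2c^{+}$, and since $(SSR)$ forces $\big|\int_{\partial\Omega}F(x,u_{n})\,ds\big|\le\int_{\partial\Omega}\tilde F<\infty$, we deduce that $\int_{\partial\Omega}u_{n}f(x,u_{n})\,ds$ stays bounded. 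Assume $\|u_{n}\|_{c}\to\infty$ and set $v_{n}=u_{n}/\|u_{n}\|_{c}$; dividing $J'(u_{n})u_{n}=o(1)$ by $\|u_{n}\|_{c}^{2}$ and using the last bound one obtains $\|v_{n}\|_{c}^{2}-\mu_{1}\|v_{n}\|_{\partial}^{2}\to 0$, which by Lemma \ref{lmac}(i) and the compact trace embedding forces $v_{n}\to t\phi_{1}$ strongly with $\|t\phi_{1}\|_{c}=1$; as $A^{+}$ is closed, $t>0$, hence $u_{n}\to+\infty$ a.e.\ on $\partial\Omega$ (recall $\phi_{1}>0$ on $\partial\Omega$). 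Since $u_{n}f(x,u_{n})$ is bounded above by a constant (by $(f_{1})$ on bounded arguments and $(HOC)^{-}$ for large ones), the reversed Fatou lemma together with $(HOC)^{-}$ gives $\limsup_{n}\int_{\partial\Omega}u_{n}f(x,u_{n})\,ds\le\int_{\partial\Omega}a\,ds<0$; but $J'(u_{n})u_{n}=\big(\|u_{n}\|_{c}^{2}-\mu_{1}\|u_{n}\|_{\partial}^{2}\big)-\int_{\partial\Omega}u_{n}f(x,u_{n})\,ds=o(1)$ and Lemma \ref{lmac}(i) force $\liminf_{n}\int_{\partial\Omega}u_{n}f(x,u_{n})\,ds\ge 0$, a contradiction. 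Hence $(u_{n})$ is bounded; up to a subsequence $u_{n}\rightharpoonup u^{+}\in A^{+}$, and testing the variational inequality with $z=u^{+}$ and using the compactness of the trace operator and of $J_{2}'$ (Lemma \ref{cont}) gives $\limsup_{n}\langle u_{n},u_{n}\rangle_{c}\le\|u^{+}\|_{c}^{2}$, whence $u_{n}\to u^{+}$ strongly. Passing to the limit, $J(u^{+})=c^{+}$ and $J'(u^{+})(z-u^{+})\ge 0$ for all $z\in A^{+}$.

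It remains to place $u^{+}$ in the interior of $A^{+}$. By $(BH2)$, using $\|\phi_{1}\|_{c}^{2}=\mu_{1}\|\phi_{1}\|_{\partial}^{2}$, one has $J(a^{+}\phi_{1})=-\int_{\partial\Omega}F(x,a^{+}\phi_{1})\,ds<0$ with $a^{+}\phi_{1}\in A^{+}$, so $c^{+}<0$. By $(BH2)^{\prime}$ and Lemma \ref{lmac}(ii), every $w\in\bigoplus_{j\ge2}E(\mu_{j})$ satisfies $J(w)\ge\tfrac12\|w\|_{c}^{2}-\tfrac{\mu_{1}}{2}\|w\|_{\partial}^{2}-\tfrac{\mu_{2}-\mu_{1}}{2}\|w\|_{\partial}^{2}=\tfrac12\|w\|_{c}^{2}-\tfrac{\mu_{2}}{2}\|w\|_{\partial}^{2}\ge 0$, i.e.\ $J\ge 0$ on $\partial A^{+}=\bigoplus_{j\ge2}E(\mu_{j})$. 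Since $J(u^{+})=c^{+}<0$, necessarily $u^{+}=t_{+}\phi_{1}+w_{+}$ with $t_{+}>0$; then for every $h\in H^{1}(\Omega)$ and $|\tau|$ small we have $u^{+}+\tau h\in A^{+}$, so $\tau\mapsto J(u^{+}+\tau h)$ has a local minimum at $\tau=0$ and $J'(u^{+})h=0$, that is, $J'(u^{+})\equiv 0$. The same reasoning over $A^{-}$ (with $(HOC)^{-}$ now used along $u_{n}\to-\infty$) produces $u^{-}=t_{-}\phi_{1}+w_{-}$ with $t_{-}<0$, $J(u^{-})=c^{-}<0$, and $J'(u^{-})\equiv 0$. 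Finally $\langle u^{+},\phi_{1}\rangle_{c}=t_{+}\|\phi_{1}\|_{c}^{2}>0>t_{-}\|\phi_{1}\|_{c}^{2}=\langle u^{-},\phi_{1}\rangle_{c}$, so $u^{+}\neq u^{-}$ and $J$ has at least two distinct critical points, completing the proof.
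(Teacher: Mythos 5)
Your proof is correct and follows the same overall strategy as the paper --- minimize $J$ over the two half-spaces $A^{\pm}$, use $(BH2)$ to force $c^{\pm}<0$, and use $(BH2)^{\prime}$ together with Lemma \ref{lmac}(ii) to show $J\geq 0$ on $A^{+}\cap A^{-}=\bigoplus_{j\geq 2}E(\mu_{j})$, which separates the two minimizers --- but your execution differs in two substantive ways, both to your advantage. First, you get boundedness from below on $A^{\pm}$ for free from Proposition \ref{auxiliar}, whereas the paper reproves it on $A^{+}$ with a separate (and rather garbled) estimate. Second, and more importantly, the paper obtains the minimizers by citing an abstract minimization theorem and never addresses the fact that a minimizer over the closed convex set $A^{+}$ is a priori only a \emph{constrained} critical point, satisfying $J^{\prime}(u^{+})(z-u^{+})\geq 0$ for $z\in A^{+}$ rather than $J^{\prime}(u^{+})\equiv 0$. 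Your argument closes this gap explicitly: since $c^{+}<0$ while $J\geq 0$ on the boundary $\bigoplus_{j\geq 2}E(\mu_{j})$ of $A^{+}$, the minimizer has strictly positive $\phi_{1}$-component, lies in the interior, and is therefore a genuine free critical point. Your compactness argument for the Ekeland--Cerami minimizing sequence (bounding $\int_{\partial\Omega}u_{n}f(x,u_{n})\,ds$ via $2J(u_{n})-J^{\prime}(u_{n})u_{n}$ and $(SSR)$, then deriving a sign contradiction from $(HOC)^{-}$ via reverse Fatou after showing $v_{n}\to t\phi_{1}$ with $t>0$) mirrors the paper's Propositions \ref{bound} and \ref{cerami} and is sound; the only point to make explicit is the uniform integrable upper bound on $u_{n}f(\cdot,u_{n})$ needed for reverse Fatou, which you correctly extract from $(f_{1})$ on bounded arguments and the uniformity in $(HOC)^{-}$ at infinity --- the same ingredient the paper uses silently in \eqref{d1}. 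In short: same route, but yours is the complete version.
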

\begin{proof}
We show that $J$ is bounded from below over  $A^{+},$ similar proof for bounded from below  over $A^{-}$
we use the $\phi_{1}$ orthogonal to $w$ under $c-$norm and $\partial-$norm. Let $u\in A^{+}$
$$J(u)=J(t\phi_{1}+w)=\frac{t^{2}}{2}||\phi_{1}||^{2}_{c}+\frac{1}{2}||w||^{2}_{c}-\frac{t^{2}}{2}\mu_{1}||\phi_{1}||_{\partial}^{2}-
\frac{1}{2}\mu_{1}||w||_{\partial}^{2}-\int_{A^{+}}F(x,t\phi_{1}+w)$$
$$\geq \frac{1}{2}||w||^{2}_{c}-\frac{1}{2}\mu_{1}||w||_{\partial}^{2}-\int_{A^{+}}\frac{\mu_{2}-\mu_{1}-\epsilon}{2}|t\phi_{1}-w|^{2}$$
$$\geq -\frac{1}{2}\mu_{1}||w||_{\partial}^{2}-(\frac{\mu_{2}-\mu_{1}-\epsilon}{2})\left(t^{2}||\phi||^{2}_{c}+||w||^{2}_{c}\right)>C\,,$$
where $C$ is some constant,
so that
$J$ is bounded  from below over  $A^{+}$, and Similarly for $A^{-}$. \\
We show that $J$  has  critical value  on $A^{+}.$ Since $J$ is bounded from below over $A^{+}$ ant it is of $C^{1}$ class and satisfy $(PS)$ condition it follows from Theorem 4.15 \cite{Fad15} that
$J$ has critical value on $A^{+}.$ Similarly for $A^{-}.$  Define
$$c^{+} = \inf_{u \in A^{+}} J(u),\,\, c^{-} = \inf_{u \in A^{-}} J(u). $$
Now we show that $$c^{+}\neq c^{-}.$$
We consider the functionals $J^{\pm}=J|_{A^{\pm}}$. Since we obtain two critical points which are denote
by $u_{0}^{+}$ and $u_{0}^{-},$ respectively.
Thus, we have the following
$$c^{+}=J^{+}(u_{0}^{+}) = \inf_{u \in A^{+}} J(u),\,\, c^{-}=J^{-}(u_{0}^{-}) = \inf_{u \in A^{-}} J(u).$$
Moreover, we see that  $u_{0}^{+}$ and $u_{0}^{-}$ are nonzero critical points. By using $(BH2)$ we have that
\begin{equation}\label{En}
 J^{\pm}(u^{\pm}_{0}) \leq J(t^{\pm}\phi_{1})=-\int_{\partial\Omega}F(x,t^{\pm}\phi_{1})\leq 0.
\end{equation}

By using $(BH2)$, $(BH2)^{\prime}$, Lemma \ref{lmac} we deduce that $J$ restrict to $\bigoplus_{j =2}^{\infty} E(\mu_{j})$ is nonnegative. In fact, taking $w\in\bigoplus_{j =2}^{\infty} E(\mu_{j})$ we have the following estimates:
\begin{eqnarray}\label{Ew}
 J(w)&=&\frac{1}{2}\left(||w||^{2}_{c}-\mu_{1}||w||^{2}_{\partial}\right)-\int_{\partial\Omega}F(x,w)\,dx \nonumber \\
 &\geq&\frac{1}{2}(\mu_{2}-\mu_{1})||w||^{2}_{\partial}-\left(\frac{\mu_{2}-\mu_{1}}{2}\right)||w||^{2}_{\partial} =0. \nonumber \\
\end{eqnarray}
We show that $u_{0}^{+}$ and $u_{0}^{-}$ are distinct. The proof follows arguing by contradiction. Suppose that
$u_{0}^{+}=u_{0}^{-} = w \in \bigoplus_{j =2}^{\infty} E(\mu_{j}).$ Then, by the estimate in \eqref{Ew}
we have that $J(u_{0}^{\pm})<0\leq J(u_{0}^{\pm}).$ Therefore, we have contradiction. So, $u_{0}^{+}\neq u_{0}^{-}.$
Thus, $u_{0}^{\pm}$ are two distinct critical points of $J.$ The functional $J$ admits two distinct critical points. The proof is now complete.

\end{proof}

\begin{proposition}\label{auxiliar2}
Suppose $(SSR)$. Then the functional $J$ admits the following saddle point geometry
\begin{description}
  \item[$i)$] $J(u_{n}) \rightarrow + \infty, \|u\| \rightarrow \infty$ where $u \in \bigoplus_{j = 2}^{\infty} E(\mu_{j})$,
  \item[$ii)$] There exists $C > 0$ such that
  $$J(u) \leq C \,\, \mbox{for any}  \,\, u \in E(\mu_{1}).$$
\end{description}
\end{proposition}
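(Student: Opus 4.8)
The plan is to check the two conditions directly from the spectral inequalities of Lemma~\ref{lmac} combined with the uniform bound $|F(x,u)|\le \tilde F(x)$ furnished by $(SSR)$. Write $W:=\bigoplus_{j=2}^{\infty}E(\mu_j)$, so that $H^{1}(\Omega)=E(\mu_1)\oplus W$. The mechanism behind part (i) is the spectral gap: by Lemma~\ref{lmac}(ii) every $w\in W$ satisfies $\|w\|_c^2\ge \mu_2\|w\|_\partial^2$, so the quadratic part of $J$ is positive definite with a gap on $W$ and the $F$-term is harmless because it is globally bounded. The mechanism behind part (ii) is that on $E(\mu_1)$ the quadratic part of $J$ vanishes identically — this is the equality case $\|u\|_c^2=\mu_1\|u\|_\partial^2$ of Lemma~\ref{lmac}(i) — so that $J$ restricted to $E(\mu_1)$ reduces to $-\int_{\partial\Omega}F(x,u)\,ds$, which is bounded above simply because $F$ is.

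For part (i) I would take $w\in W$ and, using $\mu_1\|w\|_\partial^2\le(\mu_1/\mu_2)\|w\|_c^2$, estimate
\[
J(w)=\frac12\bigl(\|w\|_c^2-\mu_1\|w\|_\partial^2\bigr)-\int_{\partial\Omega}F(x,w)\,ds
\ge \frac12\Bigl(1-\frac{\mu_1}{\mu_2}\Bigr)\|w\|_c^2-\int_{\partial\Omega}F(x,w)\,ds .
\]
Since $\mu_2>\mu_1$, the constant $1-\mu_1/\mu_2$ is strictly positive; and by $(SSR)$ one has $\int_{\partial\Omega}F(x,w)\,ds\le\int_{\partial\Omega}\tilde F(x)\,ds=:\|\tilde F\|_{L^1(\partial\Omega)}<\infty$ uniformly in $w$ (here one uses that $\partial\Omega$ has finite surface measure, so $\tilde F\in L^q(\partial\Omega)\hookrightarrow L^1(\partial\Omega)$). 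Hence $J(w)\ge\frac12(1-\mu_1/\mu_2)\|w\|_c^2-\|\tilde F\|_{L^1(\partial\Omega)}\to+\infty$ as $\|w\|_c\to\infty$, and by the norm equivalence of Proposition~\ref{Pro2.1} this is exactly the stated coercivity of $J$ on $W$.

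For part (ii), given $u\in E(\mu_1)$ one has $\|u\|_c^2=\mu_1\|u\|_\partial^2$ by Lemma~\ref{lmac}, so
\[
J(u)=\frac12\bigl(\|u\|_c^2-\mu_1\|u\|_\partial^2\bigr)-\int_{\partial\Omega}F(x,u)\,ds=-\int_{\partial\Omega}F(x,u)\,ds\le\int_{\partial\Omega}\tilde F(x)\,ds,
\]
again by $(SSR)$, which gives (ii) with $C:=\|\tilde F\|_{L^1(\partial\Omega)}$. Both parts are one-line computations once Lemma~\ref{lmac} and $(SSR)$ are in place, and I expect no genuine obstacle here; the only point deserving a word of justification is the finiteness of $\int_{\partial\Omega}\tilde F$, i.e.\ the embedding $L^q(\partial\Omega)\hookrightarrow L^1(\partial\Omega)$, which holds because $\partial\Omega$ is compact. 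Together with the Cerami condition established in Proposition~\ref{cerami}, conditions (i)–(ii) will be exactly the hypotheses needed to apply the Saddle Point Theorem with the splitting $H^1(\Omega)=E(\mu_1)\oplus W$.
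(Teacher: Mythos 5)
Your proof is correct and rests on exactly the same two ingredients the paper uses: the spectral inequality of Lemma~\ref{lmac} for the quadratic part and the uniform bound $|F(x,u)|\le\tilde F(x)$ from $(SSR)$ for the nonlinear part. The only cosmetic differences are that you prove part (ii) directly where the paper argues by contradiction, and you supply the standard spectral-gap estimate for part (i) that the paper declares ``quite standard'' and omits.
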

\begin{proof}
According $(SSR)$ is quite standard to ensure that $i)$ is verified. We will omit the details in this case.

Now we shall prove the item $ii)$.  The proof follows by contradiction. Let $(u_{n}) \in E(\mu_{1})$ be an unbounded sequence such that
$J(u_{n}) \rightarrow \infty$ as $n \rightarrow \infty$. Clearly, using the fact that $E(\mu_{1})$ is unidimensional, we can rewrite $u_{n} = t_{n} \phi_{1}$ for some sequence $(t_{n}) \in \mathbb{R}$ such that $|t_{n}| \rightarrow \infty$ as $n \rightarrow \infty$. In this way, we obtain
\begin{equation}
J(u_{n}) = J(t_{n} \phi) = - \int_{\partial \Omega} F(x, t_{n} \phi_{1}) ds
\end{equation}
holds for any $n \in \mathbb{N}$ big enough. This identity implies that
\begin{equation}\label{ww1}
\lim_{n \rightarrow \infty} \int_{\partial \Omega} F(x, t_{n} \phi_{1}) ds = - \infty.
\end{equation}

Now we mention that $(SSR)$ implies that
\begin{equation}\label{w3}
|F(x, t_{n} \phi_{1})| \leq \tilde{F}(x), x \in \partial \Omega.
\end{equation}
In particular, the last assertion yields
\begin{equation}\label{w3}
\lim_{n \rightarrow \infty} \left|\int_{\partial \Omega} F(x, t_{n} \phi_{1}) ds \right| \leq \int_{\partial \Omega} \tilde{F}(x) dx < \infty.
\end{equation}
This is a contradiction with \eqref{ww1} proving that $J$ is bounded from above on $E(\mu_{1})$. So we finish the proof.
\end{proof}

\begin{proposition}\label{auxiliar3}
Suppose $(SSR)$. Then the functional $J$ admits the following Local Linking geometry: There exists $\delta > 0$
such that
\begin{description}
  \item[$i)$] $J(u) \geq 0$ for any $ \|u\| \leq  \delta$ where $u \in \bigoplus_{j = 2}^{\infty} E(\mu_{j})$,
  \item[$ii)$] $J(u) \leq 0$ for any $ \|u\| \leq  \delta$ where $u \in E(\mu_{1})$.
\end{description}
\end{proposition}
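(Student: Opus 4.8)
The plan is to argue separately on the one--dimensional eigenspace $E(\mu_{1})=\mathrm{span}\{\phi_{1}\}$ and on its spectral complement $\bigoplus_{j\geq 2}E(\mu_{j})$, using the splitting estimates of Lemma \ref{lmac}, the subcritical bound $(f_{1})$ and the local bound $(BH3)$ (which is the hypothesis under which this Local Linking geometry is invoked, see Theorem \ref{th4}). Since $\|\cdot\|$ and $\|\cdot\|_{c}$ are equivalent by Proposition \ref{Pro2.1}, I would pass freely between the two norms.

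For item $ii)$ I would first write $u=t\phi_{1}$ with $t\in\mathbb{R}$. Exactly as in the proof of Proposition \ref{auxiliar0}, the quadratic form of $J$ vanishes identically on $E(\mu_{1})$ because $\mu_{1}=\|\phi_{1}\|_{c}^{2}/\|\phi_{1}\|_{\partial}^{2}$, so that
$$J(u)=J(t\phi_{1})=-\int_{\partial\Omega}F(x,t\phi_{1})\,ds.$$
Since the first Steklov eigenfunction lies in $L^{\infty}(\partial\Omega)$ by elliptic regularity (see \cite{Auc12}), I can take $\delta>0$ so small that $\|u\|\leq\delta$ forces $|t|\,\|\phi_{1}\|_{L^{\infty}(\partial\Omega)}\leq r$, hence $|t\phi_{1}(x)|\leq r$ for a.e.\ $x\in\partial\Omega$; the lower bound $F\geq 0$ in $(BH3)$ then gives $\int_{\partial\Omega}F(x,t\phi_{1})\,ds\geq 0$, i.e.\ $J(u)\leq 0$. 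This step is routine.

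For item $i)$ I would take $u=w\in\bigoplus_{j\geq 2}E(\mu_{j})$ and split $\partial\Omega=\{|w|\leq r\}\cup\{|w|>r\}$. On $\{|w|\leq r\}$ the upper bound in $(BH3)$ yields $F(x,w)\leq\frac{\mu_{2}-\mu_{1}-\epsilon}{2}|w|^{2}$, so the corresponding part of the integral is at most $\frac{\mu_{2}-\mu_{1}-\epsilon}{2}\|w\|_{\partial}^{2}$. On $\{|w|>r\}$, where $(BH3)$ gives no information, I would use $(f_{1})$ together with $|w|>r$ to dominate $F(x,w)$ by $C|w|^{\sigma}$ with $\sigma=\max\{2,p\}\geq 2$, and then estimate $\int_{\{|w|>r\}}|w|^{\sigma}$ by H\"{o}lder's inequality and Chebyshev's inequality (the set $\{|w|>r\}$ has measure $\leq r^{-2}\|w\|_{L^{2}(\partial\Omega)}^{2}$, which is small for small $\|w\|$), obtaining a bound of the form $C\|w\|_{c}^{\beta}$ with some exponent $\beta>2$. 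Combining these with the spectral gap estimate $\|w\|_{c}^{2}\geq\mu_{2}\|w\|_{\partial}^{2}$ from Lemma \ref{lmac}(ii),
$$J(w)\geq\frac{1}{2}\big(\|w\|_{c}^{2}-\mu_{1}\|w\|_{\partial}^{2}\big)-\frac{\mu_{2}-\mu_{1}-\epsilon}{2}\|w\|_{\partial}^{2}-C\|w\|_{c}^{\beta}\geq\frac{\epsilon}{2\mu_{2}}\|w\|_{c}^{2}-C\|w\|_{c}^{\beta},$$
which is nonnegative as soon as $\|w\|_{c}\leq\delta$ with $\delta$ small; shrinking $\delta$ further so that item $ii)$ also holds completes the proof.

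The main obstacle is the region $\{|w|>r\}$ in item $i)$: there $w$ need not be pointwise small, so $(BH3)$ cannot be used, and one must genuinely exploit both the subcritical growth of $F$ and the fact that this set shrinks as $\|w\|_{c}\to 0$, so as to absorb its contribution into a superquadratic term dominated by the gap term $\frac{\epsilon}{2\mu_{2}}\|w\|_{c}^{2}$ near the origin. A secondary and purely technical point is that the trace embedding $H^{1}(\Omega)\hookrightarrow L^{\sigma}(\partial\Omega)$ must be available for the exponent used, which holds in the subcritical range in force throughout the paper.
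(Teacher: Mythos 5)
Your proposal is correct and follows essentially the same route as the paper: for item $ii)$ both arguments reduce to the vanishing of the quadratic form on $E(\mu_{1})$ plus the $L^{\infty}$ bound from finite-dimensionality and the sign condition $F\geq 0$ in $(BH3)$, and for item $i)$ both combine $(BH3)$ near the origin with the subcritical growth $(f_{1})$ at infinity and the spectral gap $\|w\|_{c}^{2}\geq\mu_{2}\|w\|_{\partial}^{2}$ to reach an estimate of the form $J(w)\geq\frac{\epsilon}{2\mu_{2}}\|w\|^{2}-C\|w\|^{\beta}$ with $\beta>2$. The only cosmetic difference is that the paper packages your splitting of $\partial\Omega$ into the single global pointwise bound $F(x,t)\leq\frac{\mu_{2}-\mu_{1}-\epsilon}{2}t^{2}+C|t|^{q}$ with $q\in(2,2_{\star})$, which avoids the Chebyshev step.
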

\begin{proof}
First we shall consider the proof for the item $i)$.  Let $w \in \bigoplus_{j = 2}^{\infty} E(\mu_{j})$ be a fixed function. Using $(BH3)$ it follows that
\begin{equation}
F(x, t) \leq \dfrac{\mu_{2} - \mu_{1} - \epsilon}{2} t^{2} + C |t|^{q}, x \in \partial \Omega, t \in \mathbb{R}
\end{equation}
where we put $q \in (2, 2_{\star})$. Hence the last estimate and Sobolev compact embedding imply that
\begin{equation}\label{w1}
J(w) \geq \int_{\Omega} |\nabla u|^{2} dx - \int_{\Omega} c(x) w^{2} dx - \mu_{1} \int_{\partial \Omega} w^{2} dx - \dfrac{\mu_{2} - \mu_{1} - \epsilon}{2} \int_{\partial \Omega} w^{2} ds - C \|w\|^{q}
\end{equation}
Thus the variational inequality for $\mu_{2}$ and \eqref{w1} provide us the following estimates
\begin{equation}
J(w) \geq \left( 1 - \dfrac{\mu_{2} - \epsilon}{\mu_{2}} \right)\|w\|^{2} - C \|w\|^{q} = \left\{ \left( 1 - \dfrac{\mu_{2} - \epsilon}{\mu_{2}} \right) - C \|w\|^{q -2} \right\} \|w\|^{2}
\end{equation}
As a consequence we obtain a number $\delta_{1} > 0$ in such way that
\begin{equation}
J(w) \geq \dfrac{\epsilon}{2\mu_{2}} \|w\|^{2} \geq 0, \|w\| \leq \delta_{1}, w \in \bigoplus_{j = 2}^{\infty} E(\mu_{j}).
\end{equation}
So we end the proof of item $i)$.

Now we shall consider the proof for the item $ii)$. Here we mention that any norms in $E(\mu_{1})$ are equivalents. Thus there exists $C > 0$ such that
\begin{equation}
\|u\|_{\infty} \leq C \|u\|, u \in E(\mu_{1}).
\end{equation}
In particular, putting $u \in E(\mu_{1})$ in such way that $\|u\| \leq \frac{r}{C}$ we obtain
$\|u\|_{\infty} \leq r$ where $r > 0$ is given by $(BH3)$. Define $\delta_{2} = \dfrac{r}{C}$.
Using one more time $(BH3)$ we also see that
\begin{equation}
J(u) = - \int_{\partial \Omega} F(x, u) ds \leq 0, \|u\| \leq \delta_{2}, u \in E(\mu_{1}).
\end{equation}
This fact proves the item $ii)$. Hence the proof of this proposition is achieved taking $\delta := \min(\delta_{1}, \delta_{2})$. So we finish the proof.
\end{proof}

\section{The proof of Cerami condition}
In this section we shall prove that $J$ satisfies the Cerami condition. As a first step we shall prove that any Cerami sequence for $J$
is bounded in $H^{1}(\Omega)$.

\begin{proposition}\label{bound}
Suppose $(SSR)$. Assume either $(HOC)^{-}$ or $(HOC)^{+}$ holds.  Then any Cerami sequence for the functional $J$ is bounded in $H^{1}(\Omega)$.
\end{proposition}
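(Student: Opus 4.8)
The plan is to argue by contradiction. Suppose $(u_n) \subset H^1(\Omega)$ is an unbounded Cerami sequence for $J$ at some level $c \in \mathbb{R}$; passing to a subsequence we may assume $\|u_n\|_c \to \infty$. Put $v_n = u_n/\|u_n\|_c$, so $\|v_n\|_c = 1$ and, along a further subsequence, $v_n \rightharpoonup v$ in $H^1(\Omega)$ while $v_n \to v$ in $L^2(\partial\Omega)$ and a.e.\ on $\partial\Omega$ by compactness of the trace operator. Dividing the identity $2J(u_n) = \|u_n\|_c^2 - \mu_1\|u_n\|_\partial^2 - 2\int_{\partial\Omega} F(x,u_n)\,ds$ by $\|u_n\|_c^2$ and using $J(u_n) \to c$ together with the bound $|F| \le \tilde F \in L^1(\partial\Omega)$ from $(SSR)$, I obtain $\|v_n\|_c^2 - \mu_1\|v_n\|_\partial^2 \to 0$, hence $\mu_1\|v_n\|_\partial^2 \to 1$. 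By trace compactness $\mu_1\|v\|_\partial^2 = 1$, so $v \neq 0$; by weak lower semicontinuity $\|v\|_c^2 \le 1$, while Lemma \ref{lmac}(i) gives $\|v\|_c^2 \ge \mu_1\|v\|_\partial^2 = 1$. Thus $\|v\|_c = 1 = \lim_n\|v_n\|_c$, which with $v_n \rightharpoonup v$ yields $v_n \to v$ strongly in $H^1(\Omega)$, and equality in Lemma \ref{lmac}(i) forces $v$ to be a multiple of the first Steklov eigenfunction; after normalization, $v = \pm\phi_1$.

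Since $\phi_1$ can be taken positive on $\partial\Omega$ and $\|u_n\|_c \to \infty$, from $u_n(x) = \|u_n\|_c\,v_n(x)$ and $v_n \to \pm\phi_1$ a.e.\ on $\partial\Omega$ it follows that, for a.e.\ $x \in \partial\Omega$, $u_n(x) \to +\infty$ (or $u_n(x) \to -\infty$, with one fixed sign). Suppose now that $(HOC)^-$ holds and set $S = \{x \in \partial\Omega : a(x) < 0\}$, a set of positive measure. For a.e.\ $x \in S$, the inequality $uf(x,u) \le a(x) < 0$ valid for $|u|$ large yields, after writing $f = uf/u$ and integrating, $F(x,u) \to -\infty$ as $|u| \to \infty$; hence $F(x,u_n(x)) \to -\infty$ for a.e.\ $x \in S$, while $F(x,u_n) \le \tilde F(x) \in L^1(\partial\Omega)$ everywhere by $(SSR)$. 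The reverse Fatou lemma then gives $\limsup_n \int_{\partial\Omega} F(x,u_n)\,ds \le \int_{\partial\Omega}\limsup_n F(x,u_n)\,ds = -\infty$, so $\int_{\partial\Omega} F(x,u_n)\,ds \to -\infty$, contradicting $\int_{\partial\Omega} F(x,u_n)\,ds \ge -\int_{\partial\Omega}\tilde F\,ds > -\infty$. Under $(HOC)^+$ the same scheme, with $S = \{x \in \partial\Omega : b(x) > 0\}$, gives $F(x,u_n(x)) \to +\infty$ on $S$, hence $\int_{\partial\Omega} F(x,u_n)\,ds \to +\infty$, contradicting $F(x,u_n) \le \tilde F(x)$. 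Either way we reach a contradiction, so every Cerami sequence for $J$ is bounded in $H^1(\Omega)$.

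The main obstacle is the first step: upgrading $v_n \rightharpoonup v$ to strong convergence and identifying the blow-up direction $v = \pm\phi_1$, which relies on the sharp variational inequality and its equality case in Lemma \ref{lmac}(i) and on the compact trace embedding; the boundary term must be handled carefully since $f$ is only sublinear a priori, although under $(SSR)$ it is in fact globally bounded (by $(f_1)$ for $|u|$ small and by $f(x,u) \to 0$ for $|u|$ large), a fact one may also use in an alternative argument based on the Cerami identity $2J(u_n) - J'(u_n)u_n = \int_{\partial\Omega}[f(x,u_n)u_n - 2F(x,u_n)]\,ds \to 2c$ combined with Fatou applied to $f(x,u_n)u_n$ and with the decomposition $u_n = t_n\phi_1 + w_n$.
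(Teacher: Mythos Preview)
Your proof is correct, but it follows a genuinely different route from the paper's, and the comparison is instructive.

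For the blow-up step you rely only on $J(u_n)\to c$ together with the bound $|F|\le\tilde F$ from $(SSR)$ to obtain $\|v_n\|_c^2-\mu_1\|v_n\|_\partial^2\to 0$, and then invoke the equality case of Lemma~\ref{lmac}(i) to identify $v=\pm\phi_1$. The paper instead tests $J'(u_n)$ against an arbitrary $\phi$ and against $u_n$ to show that the weak limit $v$ solves the Steklov eigenproblem and satisfies $\|v\|=1$. Your version is more economical here.

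For the contradiction the two arguments diverge further. The paper decomposes $u_n=t_n\phi_1+w_n$, uses $\langle J'(u_n),w_n\rangle\to 0$ together with the $L^2(\partial\Omega)$-decay of $f(\cdot,u_n)$ coming from $(SSR)$ to force $\|w_n\|\to 0$, whence $\langle J'(u_n),u_n\rangle\to 0$ yields $\int_{\partial\Omega} f(x,u_n)u_n\,ds\to 0$; reverse Fatou and $(HOC)^-$ then give $\limsup_n\int_{\partial\Omega} f(x,u_n)u_n\,ds\le\int_{\partial\Omega} a\,ds<0$ (respectively $\ge\int_{\partial\Omega} b\,ds>0$ under $(HOC)^+$). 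You instead integrate the pointwise bound $uf(x,u)\le a(x)+\epsilon<0$ to deduce $F(x,u)\to-\infty$ on $\{a<0\}$, and then contradict $|F|\le\tilde F$ via reverse Fatou. This is valid as written, but observe that it makes no essential use of the Cerami structure beyond producing some sequence with $|u_n(x)|\to\infty$ a.e.: the implication ``$\limsup_{|u|\to\infty}uf(x,u)\le a(x)<0 \Rightarrow F(x,u)\to-\infty$'' together with the bound $|F(x,u)|\le\tilde F(x)$ already shows that $(SSR)$ and $(HOC)^-$ (with $a<0$ on a set of positive measure) are pointwise incompatible, so the proposition holds vacuously under those hypotheses. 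The paper's argument, working directly with $\int_{\partial\Omega} f(x,u_n)u_n\,ds$ and the Cerami derivative information, does not integrate back to $F$ and hence does not surface this tension; it is the argument one would want if the hypotheses were slightly relaxed (say, dropping the uniform bound on $F$ in $(SSR)$).
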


\begin{proof}
The proof of this proposition follows arguing by contradiction.
Let $u_{n} \in H^{1}(\Omega)$ be an unbounded Cerami sequence.  Define the function $v_{n} = \frac{u_{n}}{\|u_{n}\|}$. Hence $(v_{n})$ is bounded and there exists $v \in H^{1}(\Omega)$ in such way that $v_{n} \rightharpoonup v$ in $H^{1}(\Omega)$. The Sobolev compact embedding says also that
$v_{n} \rightarrow v \in L^{q}(\Omega), q \in [1, 2^{\star})$ ($2^{\star}=\frac{2N}{N-2}$) and $v_{n} \rightarrow v $ 
a. e.; in $\Omega$ such that $|v_{n}| \leq h$ for some  $h \in L^{q}(\Omega)$. 
Similarly, the compact embedings $H^{1}(\Omega) \subset L^{r}(\partial \Omega)$ 
imply that $v_{n} \rightarrow v$ in $L^{r}(\partial \Omega)$ and $v_{n} \rightarrow v$ a. e in $\partial \Omega$ for any $r \in [1, 2_{\star})
\,\,(2_{\star}=\frac{2N-1}{N-2})$. 

Now using that $(u_{n})$ is a Cerami sequence we have that
\begin{equation}
\int_{\Omega} \nabla u_{n} \nabla \phi dx + \int_{\Omega} c(x) u_{n} \phi dx  - \mu_{1} \int_{\partial \Omega} u_{n} \phi- \int_{\partial \Omega} f(x,u_{n}) \phi ds = \langle J^{\prime}(u_{n}), \phi \rangle, \phi \in H^{1}(\Omega).
\end{equation}
Dividing the last expression by $\|u_{n}\|$ we obtain
\begin{equation}\label{e0}
\int_{\Omega} \nabla v_{n} \nabla \phi dx + \int_{\Omega} c(x) v_{n} \phi dx  - \mu_{1} \int_{\partial \Omega} v_{n} \phi- \int_{\partial \Omega} \dfrac{f(x,u_{n})}{u_{n}} v_{n} \phi ds = o_{n}(1), \phi \in H^{1}(\Omega).
\end{equation}
According to (SSR) and the Dominated Convergence Theorem we see that
\begin{equation}\label{e1}
\lim_{n \rightarrow \infty}\int_{\partial \Omega} \dfrac{f(x,u_{n})}{u_{n}} v_{n} \phi ds = 0.
\end{equation}
Using \eqref{e0}, \eqref{e1} we conclude that
\begin{equation}
\int_{\Omega} \nabla v \nabla \phi dx + \int_{\Omega} c(x) v \phi dx  - \mu_{1} \int_{\partial \Omega} v \phi = 0, \phi \in H^{1}(\Omega).
\end{equation}
Moreover, using $\phi = u_{n}$ as testing function, we have that
\begin{eqnarray}\label{iden}
  \|v\|^{2}&=& \int_{\Omega} |\nabla v|^{2} dx + \int_{\Omega} c(x) v^{2} dx = \mu_{1} \int_{\partial \Omega} v^{2} ds = \mu_{1} \lim_{n \rightarrow \infty} \int_{\Omega} v_{n}^{2} ds \nonumber \\
   &=&  \lim_{n \rightarrow \infty} \left\{ -\frac{\langle J^{\prime} (u_{n}), u_{n}\rangle}{\|u_{n}\|^{2}} + \int_{\Omega} |\nabla v_{n}|^{2} dx + \int_{\Omega} c(x) v_{n}^{2} dx - \int_{\partial \Omega} \dfrac{f(x,u_{n})}{u_{n}} v_{n} ds \right\} \nonumber \\
   &=&  \lim_{n \rightarrow \infty} \left\{ -\frac{\langle J^{\prime} (u_{n}), u_{n}\rangle}{\|u_{n}\|^{2}} + 1 - \int_{\partial \Omega} \dfrac{f(x,u_{n})}{u_{n}} v_{n} ds \right\} = 1. \nonumber \\
\end{eqnarray}
Here was used the fact that $(v_{n})$ is normalized. In particular, using \eqref{iden} the weak convergence implies that $v_{n} \rightarrow v$ in $H^{1}(\Omega)$.  Thus we have been showed that $v$ is a nonzero weak solution for the eigenvalue problem
\begin{equation}\label{eig}
\begin{gathered}
 -\Delta u + c(x)u = 0 \quad\text{in } \Omega,\\
\frac{\partial u}{\partial \nu}=\mu_{1}u \quad\text{on }
\partial\Omega.
\end{gathered}
\end{equation}
As a consequence $v = t \phi_{1}$ for some $t \in \mathbb{R}\backslash \{ 0\}$ where $\phi_{1}$ denotes the first eigenvalue for the problem \eqref{eig}.

Now we observe that $|u_{n}| \rightarrow \infty$ on the set $[v \neq 0] := \{ x \in \Omega : |v(x)| \neq 0 \}$. Putting $\phi = u_{n}$ as testing function it follows that
\begin{equation}\label{ee0}
\int_{\Omega} |\nabla u_{n}|^{2} dx + \int_{\Omega} c(x) u_{n}^{2} dx  - \mu_{1} \int_{\partial \Omega} u_{n}^{2}- \int_{\partial \Omega} f(x,u_{n})u_{n}ds = \langle J^{\prime}(u_{n}), u_{n} \rangle.
\end{equation}

Now we shall write $u_{n} = t_{n}\phi_{1} + w_{n}$ where $t_{n} \in \mathbb{R}$ and $(w_{n}) \in \bigoplus_{j = 2}^{\infty} E(\mu_{j}).$ The main feature here is to prove that $(w_{n})$ is bounded sequence. In order to do that we take $v = w_{n}$ as testing function proving that
\begin{eqnarray}
0 \leq \left( 1 - \dfrac{\mu_{1}}{\mu_{2}}\right)\|w_{n}\|^{2} &\leq& \int_{\Omega} |\nabla w_{n}|^{2} dx - \int_{\Omega} c(x) w_{n}^{2} -\mu_{1} \int_{\partial \Omega} w_{n}^{2}  \nonumber \\ &\leq& \int_{\partial \Omega} f(x,u_{n}) w_{n} dx + o_{n}(1). \nonumber \\
\end{eqnarray}
Now using $(SSR)$ and Sobolev embedding $H^{1}(\Omega) \subset L^{2}( \partial \Omega)$ one finds
\begin{equation}
0 \leq \left( 1 - \dfrac{\mu_{1}}{\mu_{2}}\right)\|w_{n}\|^{2} \leq C + C \|w_{n}\|.
\end{equation}
As a consequence $(w_{n})$ is now bounded in $H^{1}(\Omega)$. Taking into account $(SSR)$ again and using $\phi = w_{n}$ as testing function we obtain
\begin{eqnarray}\label{e3}
0 \leq \left( 1 - \dfrac{\mu_{1}}{\mu_{2}}\right)\|w_{n}\|^{2} &\leq& \int_{\partial \Omega} |f(x, u_{n}) w_{n}| dx
 \nonumber \\
 &\leq& \|f(. , u_{n})\|_{L^{2}(\partial \Omega)} \|w_{n}\|_{L^{2}(\partial \Omega)} \leq C \|f(. , u_{n})\|_{L^{2}(\partial \Omega)} \|w_{n}\|. \nonumber \\
\end{eqnarray}
Therefore, using \eqref{e3} and Lebesgue Dominated Convergence Theorem, we see that $\|w_{n}\|\rightarrow 0$ as $n \rightarrow \infty$. This implies that
\begin{equation}\label{e4}
\int_{\Omega} |\nabla u_{n}|^{2} dx + \int_{\Omega} c(x) u_{n}^{2} dx  - \mu_{1} \int_{\partial \Omega} u_{n}^{2} = o_{n}(1).
\end{equation}
So, that \eqref{ee0} and \eqref{e4} give us
\begin{equation}\label{d0}
\lim_{n \rightarrow \infty } \int_{\partial \Omega} f(x,u_{n})u_{n}ds = 0.
\end{equation}
On the other hand, using Fatou's Lemma, we easily see that
\begin{equation}
\limsup_{n \rightarrow \infty } \int_{\partial \Omega} f(x,u_{n})u_{n}ds \neq 0.
\end{equation}
In fact, using assumption $(HOC)^{-}$ is not hard to see that
\begin{equation}\label{d1}
\limsup_{n \rightarrow \infty } \int_{\partial \Omega} f(x,u_{n})u_{n}ds \leq \int_{\partial \Omega} \limsup_{n \rightarrow \infty } f(x,u_{n})u_{n}ds \leq \int_{\partial \Omega} a(x) dx < 0.
\end{equation}
Similarly, using the assumption $(HOC)^{+}$ we can prove that
\begin{equation}\label{d2}
\liminf_{n \rightarrow \infty } \int_{\partial \Omega} f(x,u_{n})u_{n}ds \geq \int_{\partial \Omega} \liminf_{n \rightarrow \infty } f(x,u_{n})u_{n}ds \geq \int_{\partial \Omega} b(x) dx > 0.
\end{equation}
In conclusion, the equations \eqref{d1} and \eqref{d2} provide us a contradiction with \eqref{d0}. Thus the sequence $(u_{n})$ is now bounded. This finishes the proof.
\end{proof}

Now we stay in position to prove that any Cerami sequences for $J$ admits a subsequence which is strongly convergent sequence in $H^{1}(\Omega)$.
\begin{proposition}\label{cerami}
Suppose $(SSR)$. Assume also either $(HOC)^{-}$ or $(HOC)^{+}$ holds. Then the functional $J$ satisfies the $(Ce)_{c}$ condition at any level $c \in \mathbb{R}$.
\end{proposition}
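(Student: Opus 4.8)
The plan is to use the standard variational scheme for proving the Cerami condition, combining boundedness (already established in Proposition \ref{bound}) with a compactness argument based on the compact embedding of the trace operator and the splitting $J = J_1 - J_2$ from Lemma \ref{cont}. Let me lay out the steps.

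**Step 1: Boundedness.** Let $(u_n) \subset H^1(\Omega)$ be a Cerami sequence at level $c$, i.e.\ $J(u_n) \to c$ and $(1 + \|u_n\|) \|J'(u_n)\| \to 0$. By Proposition \ref{bound}, under $(SSR)$ together with $(HOC)^-$ or $(HOC)^+$, the sequence $(u_n)$ is bounded in $H^1(\Omega)$.

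**Step 2: Weak convergence and trace compactness.** Since $(u_n)$ is bounded in the Hilbert space $H^1(\Omega)$, up to a subsequence $u_n \rightharpoonup u$ in $H^1(\Omega)$. By the compactness of the trace operator $H^1(\Omega) \hookrightarrow L^r(\partial \Omega)$ for $r \in [1, 2_\star)$ (in particular for $r$ with $s+1 \le r < 2_\star$, where $s$ is the exponent from $(f_1)$), we get $u_n \to u$ strongly in $L^{s+1}(\partial\Omega)$. Also $u_n \to u$ in $L^2(\Omega)$ by the Rellich--Kondrachov theorem, which controls the term $\int_\Omega c(x) u_n^2$ since $c \in L^p(\Omega)$ with $p \ge N$.

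**Step 3: Recovering strong convergence via the $J_1$--$J_2$ splitting.** Write, as in Lemma \ref{cont}, $J'(u_n) = A u_n - J_2'(u_n)$, where $A : H^1(\Omega) \to H^1(\Omega)$ is the bounded self-adjoint operator associated with the bilinear form $\langle u, v\rangle_c - \mu_1 \int_{\partial\Omega} uv\,ds$. The point is that $J_2'$ is compact (Lemma \ref{cont}), so $J_2'(u_n) \to J_2'(u)$ strongly in $H^1(\Omega)$ along the subsequence. Since $J'(u_n) \to 0$ strongly (from the Cerami condition), we obtain $A u_n \to J_2'(u)$ strongly in $H^1(\Omega)$.

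**Step 4: Inverting $A$ on the relevant subspace.** The operator $A$ is not globally invertible because $\mu_1$ is an eigenvalue: its kernel is $E(\mu_1) = \mathbb{R}\phi_1$. However, $A$ is invertible on the orthogonal complement $(E(\mu_1))^\perp = \bigoplus_{j\ge 2} E(\mu_j)$ with bounded inverse there (the spectral gap $\mu_2 > \mu_1$ from Lemma \ref{lmac} gives coercivity of the form on that subspace). So I split $u_n = t_n \phi_1 + w_n$ with $w_n \in \bigoplus_{j\ge 2} E(\mu_j)$. Applying $A$ kills the $t_n \phi_1$ part, and $A w_n = A u_n \to J_2'(u)$; since $A$ restricted to $\bigoplus_{j\ge2} E(\mu_j)$ has bounded inverse, $w_n$ converges strongly in $H^1(\Omega)$. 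It remains to show $(t_n)$ converges; but $(u_n)$ is bounded, hence so is $(t_n) \subset \mathbb{R}$, and since $u_n \to u$ on $\partial\Omega$ in $L^2$ with $\phi_1$ having nonzero trace, $t_n = \langle u_n, \phi_1\rangle_c / \|\phi_1\|_c^2 \to \langle u, \phi_1\rangle_c/\|\phi_1\|_c^2$. Therefore $u_n = t_n\phi_1 + w_n$ converges strongly in $H^1(\Omega)$, and the limit is a critical point with $J(u) = c$.

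**Main obstacle.** The delicate point is Step 4: one cannot simply invert $J'$ because $A$ is degenerate at $\mu_1$. The argument must exploit the spectral decomposition and the gap $\mu_2 - \mu_1 > 0$ from Lemma \ref{lmac} to recover the $\bigoplus_{j\ge2}E(\mu_j)$-component, and handle the one-dimensional $E(\mu_1)$-component separately using that the trace of $\phi_1$ does not vanish identically (so that strong $L^2(\partial\Omega)$ convergence pins down $t_n$). One should note that this works regardless of which of $(HOC)^\pm$ is assumed, since those hypotheses were only used to get boundedness in Step 1; the compactness argument in Steps 2--4 depends only on $(SSR)$ and the structural lemmas.

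**Alternative (more self-contained) route.** Instead of the abstract operator argument, one may argue directly: test $J'(u_n) - J'(u_m)$ against $u_n - u_m$, obtaining
$$
\|u_n - u_m\|_c^2 = \langle J'(u_n) - J'(u_m), u_n - u_m\rangle + \mu_1 \int_{\partial\Omega}(u_n-u_m)^2\,ds + \int_{\partial\Omega}\big(f(x,u_n)-f(x,u_m)\big)(u_n-u_m)\,ds.
$$
The first term on the right tends to $0$ because $J'(u_n) \to 0$ and $(u_n)$ is bounded; the second and third tend to $0$ by the strong $L^{s+1}(\partial\Omega)$ convergence of $(u_n)$ from Step 2 together with Lemma \ref{Nem} (applied to $\mathbb{F}: L^{s+1}(\partial\Omega) \to L^{(s+1)/s}(\partial\Omega)$) and Hölder's inequality. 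Hence $(u_n)$ is Cauchy in $(H^1(\Omega), \|\cdot\|_c)$, so it converges strongly, and passing to the limit in $J'(u_n) = o(1)$ shows the limit is a critical point at level $c$. This second approach avoids any spectral bookkeeping and is the one I would write out in full.
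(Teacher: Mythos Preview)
Your proposal is correct and follows the same overall strategy as the paper: invoke Proposition~\ref{bound} for boundedness, then upgrade weak to strong convergence using the subcritical growth of $f$ and compactness of the trace embedding. The paper's own proof is extremely terse---it literally says ``Using the fact that $f$ is subcritical we can also prove that $(u_{n})$ strongly converges in $H^{1}(\Omega)$. We omit the details''---so your alternative route (testing $J'(u_n)-J'(u_m)$ against $u_n-u_m$ and using Lemma~\ref{Nem}) is precisely the standard argument the paper leaves to the reader, and is the cleaner of your two options to write out.
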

\begin{proof}
Let $(u_{n})$ be a Cerami sequence for the functional $J$. According to Proposition \ref{bound} the sequence $(u_{n})$ is bounded. Hence $u_{n}\rightharpoonup u$ for some $u \in H^{1}(\Omega)$. Using the fact that $f$ is subcritical we can also prove that $(u_{n})$ strongly converges
in $H^{1}(\Omega)$. We omit the details.
\end{proof}

\section{The proof of the main theorems}

\begin{proof}[Proof of Theorem \ref{th1}]
 Initially we observe that $J$ is bounded from below, see Proposition \ref{auxiliar}. After that, the functional $J$ satisfies the Cerami condition, see Proposition \ref{cerami}. So that, using Ekeland's Variational Principle, we obtain a critical point $u_{0} \in H^{1}(\Omega)$ such that $J(u_{0}) = c_{\inf}$ where $c_{\inf} = \displaystyle \inf_{u \in H^{1}(\Omega)} J(u)$, see Proposition \ref{auxiliar}.
\end{proof}

\begin{proof}[Proof of Theorem \ref{th2}]
The main idea here is to minimize $J$ over the sets $A^{+}$ and $A^{-}$. Here we mention that $J$ is bounded from below, see Proposition \ref{auxiliar}. Under this condition we consider the functionals  $J|_{A^{+}}$ and $J|_{A^{-}}$. Minimizing these functionals we obtain two distinct critical points for $J$, see Proposition \ref{auxiliar1}. More precisely, we obtain two critical points $u_{+} \in A^{+} , u_{-}  \in A^{-}$ in such way that $J(u_{+}) < 0, J(u_{-}) < 0$ and
\begin{equation}
J(u_{+}) = \inf_{u \in A^{+}} J(u), J(u_{-}) = \inf_{u \in A^{-}} J(u).
\end{equation}

On the other hand, the functional admits the mountain pass geometry, see Proposition \ref{auxiliar0}. As $J$ satisfies the Cerami condition we obtain a critical point $u_{1}$ of mountain pass type. In particular, we have $J(u_{1}) > 0$ proving that $u_{1}, u_{+}, u_{-}$ are distinct critical points for $J$. Hence the problem \eqref{E1} admits at least three nontrivial solutions. This completes the proof.
\end{proof}

\begin{proof}[Proof of Theorem \ref{th3}]
Now we shall consider the saddle point geometry given in Proposition \ref{auxiliar2}. Taking into account that $J$ satisfies the Cerami we obtain a critical point $u_{2} \in H^{1}(\Omega)$ for the functional $J$ in such way that $C_{1}(J, u_{2}) \neq 0$. Here $C_{k}(J, .)$ is stand for the critical groups for $J$
at some critical point. For further results on critical groups and morse theory we infer the reader to Chang \cite{chang}.  Minimizing $J$ over the sets $A^{+}, A^{-}$ we obtain again two critical points $u_{-}, u_{+} \in H^{1}(\Omega)$ such that $C_{k}(J, u_{\pm}) = \delta_{k0} \mathbb{Z}$. As a consequence $u_{\pm}, u_{2}$  are three different critical points and problem \eqref{E1} admits at least three nontrivial solutions. So we finish the proof.
\end{proof}

\begin{proof}[Proof of Theorem \ref{th4}]
First of all, the functional $J$ satisfies the Cerami condition, see Proposition \ref{cerami}. According to Proposition \ref{auxiliar} the functional $J$ is bounded from below. Furthermore, Proposition \ref{auxiliar3} says that $J$ admits the Local Liking geometry. For the Local Linking Theorem we infer the reader to \cite{willem}. Hence using the Local Linking Theorem we obtain the existence of two nontrivial weak solutions for the problem \eqref{E1}. So we end the proof.
\end{proof}

\textbf{Acknowledgments}: The second author in this work was partially supported by CNPq/Brazil with grants 211623/2013-0.

\end{document}